%Relating Notions of Convergence in Geometric Analysis

\documentclass[11pt]{amsart}
\usepackage{amsmath, amsthm, amssymb}
\usepackage{graphicx}
\usepackage[usenames]{color}
\usepackage{srcltx} % JD - to jump between source and preview
\usepackage{verbatim}
%\topmargin0.8in
%TIKZ THINGS
\usepackage{tikz}
\usepackage{alltt}
\usetikzlibrary{shapes}
\usetikzlibrary{plotmarks}

\newtheorem{thm}{Theorem}[section]
\newcommand{\bt}{\begin{thm}}
\newcommand{\et}{\end{thm}}

\newtheorem{ex}[thm]{Example}

\newtheorem{cor}[thm]{Corollary}   %remember switch all {coro} to {cor}
\newcommand{\bc}{\begin{cor}}
\newcommand{\ec}{\end{cor}}

\newtheorem{lem}[thm]{Lemma}   %remember to switch all {lem} to {lem}
\newcommand{\bl}{\begin{lem}}
\newcommand{\el}{\end{lem}}

\newtheorem{prop}[thm]{Proposition}
\newcommand{\bp}{\begin{prop}}
\newcommand{\ep}{\end{prop}}

\newtheorem{defn}[thm]{Definition}
\newcommand{\bd}{\begin{defn}}    % This produces an error????    
\newcommand{\ed}{\end{defn}}

\newtheorem{rmrk}[thm]{Remark}   %remember to switch all {rmrk} to {rmrk}

\newcommand{\br}{\begin{rmrk}}
\newcommand{\er}{\end{rmrk}}

\newcommand{\mGHto}{\xrightarrow{\, \textrm{mGH}\,} }
\newcommand{\GHto}{\stackrel {\textrm{GH}}{\longrightarrow} }
\newcommand{\Fto}{\stackrel {\mathcal{F}}{\longrightarrow} }
\newcommand{\VFto}{\stackrel {\mathcal{VF}}{\longrightarrow} }

\newcommand{\paeto}{\stackrel {\text{p.a.e.}}{\longrightarrow} }

\newcommand{\be}{\begin{equation}}

\newcommand{\ee}{\end{equation}}

\newcommand{\N}{\mathbb{N}}

\newcommand{\R}{\mathbb{R}}

\newcommand{\diam}{\operatorname{Diam}}

%\newcommand{\vol}{\operatorname{vol}}

%\newcommand{\projection}{\operatorname{pr}}

 % new Jan 2016
\newcommand{\disjointunion}{\sqcup}

\newcommand{\mass}{{\mathbf M}}

%\newcommand{\nmass}[1]{{\mathbf N}(#1}
%\newcommand{\flnorm}[1]{{\mathbf F}(#1}

        %duality with lipschitz functions%
         %metric current%
      %metric normal current%
    %metric rect. current%  
 %metric int. rect. current%
      %metric integral current%

\newcommand{\vol}{\operatorname{Vol}}

\newcommand{\lp}{\left (}
\newcommand{\rp}{\right )}

\newcommand{\Sp}{\mathbb{S}}      %Sphere%
\newcommand{\Tor}{\mathbb{T}}     %Torus%

\begin{document}

\title[Relating Notions of Convergence]{Relating Notions of Convergence in Geometric Analysis}

\author{Brian Allen}
\address{University of Hartford}
\email{brianallenmath@gmail.com}

\author{Christina Sormani}
\thanks{C. Sormani was partially supported by NSF DMS 1612049.  Some of this work was completed in
discussions at the IAS Emerging Topics on Scalar Curvature and Convergence that C. Sormani co-organized with M. Gromov in 2018.}
\address{CUNY Graduate Center and Lehman College}
\email{sormanic@gmail.com}

%\date{February 2017}

%\keywords{}

%49Q15 (Geometric measure and integration theory, integral and normal currents)

%\subjclass[2000]{49Q15}

\begin{abstract}
We relate $L^p$ convergence of metric tensors or volume convergence 
to a given smooth metric to intrinsic flat and Gromov-Hausdorff convergence for sequences of Riemannian manifolds. We present many examples of sequences of conformal metrics which demonstrate that these notions of convergence do not agree in general even when the sequence is conformal, $g_j=f_j^2g_0$, to a fixed manifold.  We then prove a theorem demonstrating that when sequences of metric tensors on a fixed manifold $M$ are bounded,  $(1-1/j)g_0 \le g_j \le K g_0$, and either the volumes converge, $\vol_j(M)\to \vol_0(M)$,
or the metric tensors converge in the $L^p$ sense, then the Riemannian manifolds $(M,g_j)$ converge in the 
measured Gromov-Hausdorff and volume preserving intrinsic flat sense to $(M,g_0)$.  
\end{abstract}

\maketitle

\section{Introduction}

There are many settings in Riemannian geometry where one must examine a sequence of Riemannian manifolds and prove that they converge in some sense to a limit space.  These situations arise when one is seeking a canonical metric in a given class, or examining how manifolds evolve under flows, or studying the stability of a rigidity theorem.  When the convergence cannot be proven to be smooth, because known examples exist which do not converge smoothly, then
one can still hope to prove convergence in some weaker sense.  Analysts will immediately be drawn to consider notions like Lesbegue ($L^p$) convergence of metric tensors.  Geometers will turn instead to geometric notions like Gromov-Hausdorff  (GH) convergence in which distances are controlled but not metric tensors.  If controlling distances seems too strong a notion, Geometers turn to Intrinsic Flat ($\mathcal{F}$) convergence, in which only the filling volumes between the spaces and their limits must tend to 0.  Neither GH nor $\mathcal{F}$ convergence imply volumes of the spaces converge, and so  geometers also consider the stronger metric measure (See Sturm \cite{Sturm-mm-1} for the best definition of metric measure convergence), measured Gromov-Hausdorff distance (mGH), or volume preserving intrinsic flat $\mathcal{VF}$ convergences as well.   We review all these notions in Section~\ref{sect: review}.

Understanding compactness properties for sequences of Riemannian manifolds under various geometric conditions is a vast area of research in geometric analysis. Important work where integral bounds on curvature or volume pinching is assumed has been done by Anderson \cite{Anderson-Ricci, Anderson-Orbifold}, Anderson and Cheeger \cite{Anderson-Cheeger}, Colding \cite{Colding-volume, Colding-shape}, Gao \cite{Gao-integral1}, Petersen and Wei \cite{Petersen-Wei-integral1, Petersen-Wei-integral2}, Petersen and Sprouse \cite{Petersen-Sprouse-integral}, and Yang \cite{DYang-integral1, DYang-integral2, DYang-integral3}. In these works the authors have various results showing $C^{0,\alpha}$ (possibly away from singular points) convergence of Riemannian metrics under various integral bounds on curvature or assumptions on volume (See the survey articles of Petersen \cite{Petersen-Survey} and Sormani \cite{Sormani-Survey} for a broad discussion of convergence theorems). In the case of convergence of conformal Riemannian metrics important related work has been done by Aldana, Carron, and Tapie \cite{Aldana-Carron-Tapie}, Brendle \cite{Brendle-Conformal}, Bonk, Heinonen, and Saksman \cite{Bonk-Heinonen-Saksman}, Chang, Gursky, and Wolff \cite{Chang-Gursky-Wolff}, Gursky \cite{Gursky}, and Wang \cite{Wang-Isoperimetric}  (See the survey article by Chang \cite{Chang-Survey} for a broad look at results in conformal geoemtry). 

In this paper our goal is to provide hypotheses which can by used to bootstrap from Lebesgue convergence ($L^p$) or volume convergence to the geometric notions of convergence like GH and $\mathcal{F}$. Before we state our theorems, we describe the examples we've constructed which provide new insight into the distinctions between these notions of convergence in the conformal setting.  We consider  $M_j=(M^m, f_j^2 g_0)$ where $(M^m,g_0)$ is either
a standard flat torus or a sphere.  We begin with Example~\ref{Cinched-Sphere} in which the sequence converges in the
$L^p$ sense to the standard sphere but the GH and $\mathcal{F}$ limit is a cinched sphere.  This occurs because Lesbegue convergence cannot see what happens on a set, $S$, of measure $0$ (in this case the equator) and yet distance based notions of convergence detect short cuts through $S$ if $g_j$ is smaller than the expected $L^p$
limit on $S$.  To avoid this difficulty of having shorter paths we require $C^0$ convergence from below in the rest of our examples and our theorems.   

In Examples~\ref{No C0 Convergence}- ~\ref{PointwiseOnDenseSet}, we consider conformally flat tori $M_j=(M^m, f_j^2 g_0)$ with $f_j \ge (1-1/j)$. In Example~\ref{No C0 Convergence} we add bumps to the tori where
$f_j=K>1$ so that we have $L^p$ convergence but not $C^0$ convergence.  Here we
 see that $M_j $ converge  in the GH, $\mathcal{F}$,
mGH, and $\mathcal{VF}$ sense to $(M, g_0)$. In Example~\ref{PointwiseOnDenseSet} we have $1 \le f_j \le \sqrt{2}$ and $f_j\rightarrow 1$ pointwise on a dense set but no $L^p$ convergence. Due to the fact that more and more bumps are appearing increasingly on a finer and finer grid it becomes advantageous to travel along taxi curves and hence we find GH and $\mathcal{F}$ convergence to a torus with a taxi metric. 

In the remaining examples we explore what can happen when the uniform upper bound on the metric is removed. In these examples we see that the particular $p$ for which we have $L^p$ bounds and/or convergence of the conformal factor $f_j$ plays a crucial role. In Example~\ref{L^p Conv} we have a sequence of tori with bumps that grow taller with $j$ while keeping $L^m$ convergence of $f_j \rightarrow 1$ (where $m$ is the dimension of $M^m$).  Here we see that the volumes converge and that
we have mGH and $\mathcal{VF}$ convergence.   In Example~\ref{No L^m Conv} we allow the
bumps to grow enough that we do not have $L^m$ convergence of $f_j$ and the volumes don't converge but both do remain bounded above. In this case we find mGH and $\mathcal{VF}$ convergence to a flat torus with a flat disk attached so that the boundary of the disk is attached to a point of the flat torus. This shows that the volume convergence assumption is crucial for ruling out bubbling.  In Example~\ref{No Conv} we allow the bumps to grow enough that the $L^p$ norm of $f_j$ diverges for any $p \ge m$. In this case we see that the sequence does not converge in the GH or $\mathcal{F}$ sense to a compact metric space. This last example illustrates the worst that can happen when we do not have volume convergence. In Example~\ref{VolControlDiamNotConvergent} we allow the bumps to grow so that we have $L^m$ convergence of $f_j \rightarrow 1$ and convergence of volume, which is the borderline case between Examples~\ref{L^p Conv} and \ref{No L^m Conv}, and we see that the GH limit is a torus with a line attached. In Example~\ref{FConvNoGHConv} we adapt the previous example to allow increasingly many bumps to form while maintaining $L^m$ of $f_j \rightarrow 1$ and volume convergence so that there is no GH limit. 

Despite all these intriguing examples, we are able prove a surprisingly strong theorem 
about sequences of metrics which are all conformal to the same base manifold:

\begin{thm}\label{ConfGHandFlatConv}
Suppose we have a sequence of conformal metrics, $f_j^2(x) g_0$, $f_j: M \rightarrow (0,\infty)$ continuous, $g_0$ smooth, on a
compact oriented manifold without boundary, $M$, such that 
\be \label{conf1}
 0<1 - 1/j \le f_j \le K < \infty.
 \ee
 If we have Lebesgue convergence of the conformal factors,
  \be \label{conf-L^p}
  f_j \to 1 \textrm{ in } L^p(M, g_0) , \quad p \in [1,\infty],
  \ee
  or volume convergence,
 \be \label{conf-vol}
 \vol_{g_j}(M) \to \vol_{g_0}(M),
 \ee
     then the sequence of continuous Riemannian
   manifolds $M_j=(M, f_j^2(x) g_0)$ converges to $M_0=(M, g_0)$ in both
   the measured Gromov-Hausdorff sense and the volume preserving intrinsic flat sense: 
\be
M_j\mGHto M_0 \textrm{ and } M_j \VFto M_0.
\ee
\end{thm}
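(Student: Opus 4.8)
\emph{Strategy and reduction.} The plan is to collapse both hypotheses into one statement, get volume and measure convergence for free, prove \emph{uniform} convergence of the distance functions, and then assemble the measured Gromov--Hausdorff and volume-preserving intrinsic flat conclusions. For $j\ge 2$ the bounds \eqref{conf1} confine $f_j$ to the compact interval $[1/2,K]$, on which $t\mapsto t^m$ is bi-Lipschitz, so $\|f_j-1\|_{L^1(M,g_0)}$ and $\|f_j^m-1\|_{L^1(M,g_0)}$ are comparable. If \eqref{conf-L^p} holds, then $f_j\to1$ in measure (uniformly if $p=\infty$) and, being uniformly bounded, $f_j\to1$ in $L^1$ by dominated convergence. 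If instead \eqref{conf-vol} holds, then $\int_M\big(f_j^m-(1-1/j)^m\big)\,d\vol_{g_0}\to\vol_{g_0}(M)-\vol_{g_0}(M)=0$ with non-negative integrand, so $f_j^m\to1$ in $L^1$, hence again $f_j\to1$ in $L^1$. Thus in all cases $\epsilon_j:=\|f_j-1\|_{L^1(M,g_0)}\to0$; in particular $\vol_{g_j}(M)=\int_M f_j^m\,d\vol_{g_0}\to\vol_{g_0}(M)$ and the volume measures $f_j^m\,d\vol_{g_0}$ converge to $d\vol_{g_0}$ in total variation. It remains to establish Gromov--Hausdorff convergence.

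\emph{Uniform convergence of distances.} Working with the identity map, \eqref{conf1} gives $d_{g_j}\ge(1-1/j)\,d_{g_0}$, so $d_{g_0}-d_{g_j}\le\diam_{g_0}(M)/j\to0$ uniformly; the substance is the reverse bound
\[
d_{g_j}(p,q)\le d_{g_0}(p,q)+\eta_j,\qquad\eta_j\to0\ \text{uniformly in }p,q.
\]
I would fix $\delta>0$, form the ``bad set'' $U_{j,\delta}=\{x\in M: f_j(x)>1+\delta\}$, and note $\vol_{g_0}(U_{j,\delta})\le\epsilon_j/\delta$ by Chebyshev. Given $p,q$, take a minimizing $g_0$-geodesic $\gamma$, subdivide it into $O(\diam_{g_0}(M)/\ell)$ arcs of a fixed length $\ell$ below the injectivity and focal radii of the compact manifold $(M,g_0)$, and on each arc use the normal exponential map to build a tube of ``parallel'' curves $\{\gamma_v\}_{v\in B^{m-1}(0,r)}$ with Jacobian close to $1$. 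By the coarea formula the average over $v$ of $\length_{g_0}(\gamma_v\cap U_{j,\delta})$ is at most $C\,\vol_{g_0}(U_{j,\delta})/r^{m-1}\le C\epsilon_j/(\delta r^{m-1})$; choosing a ``good'' $v$ on each arc, concatenating, and paying at most $2Kr$ at each of the $O(\diam_{g_0}(M)/\ell)$ junctions and at the two endpoints (all within $g_0$-distance $r$), produces a path from $p$ to $q$ of $g_j$-length at most $(1+\delta)(1+Cr)\,d_{g_0}(p,q)+C'\big(r+\epsilon_j/(\delta r^{m-1})\big)$, with all constants depending only on $(M,g_0)$ and $K$. Letting $j\to\infty$, then $r\to0$, then $\delta\to0$ (equivalently, taking $r=r_j\to0$ with $\epsilon_j/r_j^{m-1}\to0$) gives the required uniform $\eta_j\to0$. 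Hence $d_{g_j}\to d_{g_0}$ uniformly, and $M_j\GHto M_0$ via the identity maps.

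\emph{Upgrading to mGH and $\mathcal{VF}$.} Since the volume measures also converge (in total variation), pushing them forward along the identity Gromov--Hausdorff approximations shows $M_j\mGHto M_0$. For the intrinsic flat statement, set $\lambda_j=\sup_{p\neq q}|d_{g_j}(p,q)-d_{g_0}(p,q)|\to0$ and glue $(M,d_{g_j})$ and $(M,d_{g_0})$ isometrically into a common metric space in which the two copies of $M$ are at Hausdorff distance $\le\lambda_j$; since $M$ is closed and oriented, the integral currents $T_j,T_0$ given by integration over $(M,g_j)$ and $(M,g_0)$ have zero boundary and mass at most $K^m\vol_{g_0}(M)$, so the standard estimate relating the $\mathcal{F}$-distance to the uniform distance distortion and the total mass gives $d_{\mathcal F}(M_j,M_0)\le\lambda_j\big(\vol_{g_j}(M)+\vol_{g_0}(M)\big)\to0$, i.e. $M_j\Fto M_0$. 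Together with $\vol_{g_j}(M)\to\vol_{g_0}(M)$ this is exactly $M_j\VFto M_0$.

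\emph{Main obstacle.} Everything outside the second paragraph is soft; the delicate point is the uniform-in-$(p,q)$ upper bound on $d_{g_j}$. A single minimizing $g_0$-geodesic may run entirely through a tall, thin bump of $f_j$, so one is forced to average over a transverse family of curves and exploit that it is the $g_0$-\emph{volume} of the bad set, not its width, which is small. Making this quantitative while keeping the error uniform over all pairs of points — including nearly antipodal ones, whose geodesics must first be subdivided and whose tubes must be reconnected — is where the real work lies.
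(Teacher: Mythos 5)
Your proof is correct, and it takes a genuinely different route from the paper's. The paper obtains Theorem~\ref{ConfGHandFlatConv} as a corollary of the more general Theorem~\ref{GenGHandFlatConvMetric}: there, $L^p$ convergence is reduced to volume convergence via Lemmas~\ref{LpToLq}, \ref{LpToLpNorm}, and \ref{ConfVolBound}; then Theorem~\ref{PointwiseAEConvergenceDistances} establishes only \emph{pointwise almost everywhere} convergence $d_j\to d_0$ on $M\times M$, by a tube argument foliated by $g_0$-geodesics, integrated over a neighborhood of pairs $(p',q')$, and followed by a countable open cover and a diagonal extraction; finally Theorem~\ref{CompactnessThmFlatandGH} (Arzel\`a--Ascoli plus the HLS gluing) produces a \emph{uniformly} convergent subsequence $d_j\to d_\infty$ with GH and $\mathcal{F}$ limits, after which the a.e.\ limit identifies $d_\infty=d_0$. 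You instead prove uniform convergence of $d_{g_j}$ directly, for \emph{every} pair $(p,q)$, by subdividing the $g_0$-geodesic into short arcs, sweeping each arc by a slab of parallel curves, and choosing (via Chebyshev and the coarea formula) a curve that spends little $g_0$-length in the bad set $\{f_j>1+\delta\}$. This bypasses both the a.e.\ intermediate step and the compactness-plus-diagonal detour, a real simplification given the hypotheses of this theorem. The trade-off is that your tube argument relies essentially on the upper bound $f_j\le K$ to absorb the $g_j$-length inside the bad set, whereas the paper's Theorem~\ref{PointwiseAEConvergenceDistances} is deliberately formulated \emph{without} an upper bound so it can be reused in settings where bi-Lipschitz control fails. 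Your remaining steps — collapsing both hypotheses to $L^1$ convergence of $f_j$, deducing mGH from total-variation convergence of the measures $f_j^m\,dV_{g_0}$ together with uniform convergence of the metrics, and bounding $d_{\mathcal F}$ by a filling over a glued slab — parallel the paper's machinery (cf.\ Theorem~\ref{VF-to-mGH-thm} and the proof of Theorem~\ref{CompactnessThmFlatandGH}); your $\mathcal F$-bound $\lambda_j\lp\vol_{g_j}(M)+\vol_{g_0}(M)\rp$ is a loose but adequate paraphrase of the estimate $C_m\,2\varepsilon_j\,\lambda^m\,\mathcal H^m_{d_0}(M)$ appearing there.
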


Note that Example~\ref{Cinched-Sphere} demonstrates that we cannot expect
convergence to $M_0$ in any sense without $C^0$ lower bound in the hypotheses.
Example~\ref{PointwiseOnDenseSet} demonstrates  that we cannot expect convergence to 
to $M_0$ in the mGH sense if we remove the hypothesis on $L^p$ convergence or volume convergence in 
Theorem \ref{ConfGHandFlatConv}.    Example~\ref{VolControlDiamNotConvergent} and Example~\ref{FConvNoGHConv}  show that we cannot expect the conclusion of Theorem \ref{ConfGHandFlatConv} to hold when a uniform upper bound is not assumed.

This conformal theorem is in fact a consequence of the following far more general theorem
that we prove within:

\begin{thm}\label{GenGHandFlatConvMetric}
Assume that $g_j$ is a sequence of continuous Riemannian metrics and $g_0$ is a smooth Riemannian metric defined on the same compact oriented manifold without boundary $M^m$ and 
\be
\lp 1 - 1/j\rp g_0(v,v) \le g_j(v,v) \le K g_0(v.v) \quad \forall v \in T_pM.  \label{metricbounds}
\ee
 If we have Lebesgue convergence of the metric tensors,
  \be \label{Lp-metric-hyp}
 \int_M |g_j - g_0|_{g_0}^p dV_{g_0} \to 0, \quad p \in [1,\infty],
 \ee
 or volume convergence,
 \be  \label{ConvergenceVolume}
 \vol_{g_j}(M) \to \vol_{g_0}(M),
 \ee
  then the sequence of Riemannian
   manifolds $M_j=(M, g_j)$ converges to $M_0=(M, g_0)$ in both
   the measured Gromov-Hausdorff sense and the volume preserving intrinsic flat sense: 
\be
M_j\mGHto M_0 \textrm{ and } M_j \VFto M_0.
\ee
\end{thm}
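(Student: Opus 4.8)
\emph{Overview.} The plan is to reduce both hypotheses to a single one — $L^1$-convergence of the metric tensors together with volume convergence — then to establish uniform convergence of the distance functions (which, with convergence of the volume measures, yields mGH), and finally to exhibit an explicit filling that controls the intrinsic flat distance.

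\emph{Step 1 (reduction).} On the compact $M$, H\"older's inequality shows that \eqref{Lp-metric-hyp} for any $p\in[1,\infty]$ implies $\int_M|g_j-g_0|_{g_0}\,dV_{g_0}\to0$; assume this. Write $h_j:=(\det g_j/\det g_0)^{1/2}$, so $dV_{g_j}=h_j\,dV_{g_0}$ and, by \eqref{metricbounds}, $(1-1/j)^{m/2}\le h_j\le K^{m/2}$. Then $g_j\to g_0$ in $L^1$ forces $h_j\to1$ in measure, hence (bounded plus convergent in measure on a finite measure space) $h_j\to1$ in $L^1$, and integrating gives \eqref{ConvergenceVolume}. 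Conversely, assume \eqref{ConvergenceVolume}. Since $h_j\ge(1-1/j)^{m/2}$ and $\int h_j\to\int 1$, Scheff\'e's argument gives $h_j\to1$ in $L^1$, so $\{h_j\}$ is uniformly integrable. Letting $\lambda_1(x)\le\dots\le\lambda_m(x)$ denote the eigenvalues of $g_j$ relative to $g_0$, one has $1-1/j\le\lambda_i$ and $\prod_i\lambda_i=h_j^2$, whence $\lambda_i\le h_j^2/(1-1/j)^{m-1}$; thus $\{\lambda_i\}_j$ is uniformly integrable, and since $\lambda_i\to1$ in measure (on $\{h_j^2\le1+\vare\}$ one has $\lambda_i\le(1+\vare)/(1-1/j)^{m-1}$, while the exceptional set has measure $\to0$), Vitali's theorem gives $\lambda_i\to1$ in $L^1$, i.e.\ $|g_j-g_0|_{g_0}\to0$ in $L^1$. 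In either case we may assume henceforth that $\int_M|g_j-g_0|_{g_0}\,dV_{g_0}\to0$ and $\vol_{g_j}(M)\to\vol_{g_0}(M)$; in particular $dV_{g_j}\to dV_{g_0}$ in total variation.

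\emph{Step 2 (uniform distance convergence, hence mGH).} The lower bound in \eqref{metricbounds} gives $d_{g_j}\ge\sqrt{1-1/j}\,d_{g_0}$, so $d_{g_j}\ge d_{g_0}-o(1)$ uniformly. For the reverse inequality fix $x,y$ and a minimizing $g_0$-geodesic $\gamma:[0,\ell]\to M$ from $x$ to $y$ (smooth and embedded, $\ell=d_{g_0}(x,y)$), and take Fermi coordinates $(t,s)$ on a tubular neighborhood $U_\delta\cong[0,\ell]\times B^{m-1}_\delta$ of $\gamma$; the parallel curves $\gamma_s(t):=(t,s)$ satisfy $\length_{g_0}(\gamma_s)\le\ell+o_\delta(1)$ uniformly in $s$, with endpoints within $g_0$-distance $O(\delta)$ of $x,y$. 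Since $\int_{U_\delta}|g_j-g_0|_{g_0}\,dV_{g_0}\to0$ and the coordinate Jacobian is bounded above and below, Fubini produces a slice $s_j\in B^{m-1}_\delta$ with $\int_0^\ell|g_j-g_0|_{g_0}(t,s_j)\,dt\to0$; together with $|\partial_t|^2_{g_0}$ bounded above, $(g_j)_{tt}\ge(1-1/j)(g_0)_{tt}$ bounded below, and $|\sqrt a-\sqrt b|\le|a-b|/\sqrt b$, this yields $\length_{g_j}(\gamma_{s_j})-\length_{g_0}(\gamma_{s_j})\to0$. Using the \emph{upper} bound $d_{g_j}\le\sqrt K\,d_{g_0}$ to join the endpoints of $\gamma_{s_j}$ back to $x,y$ at cost $O(\sqrt K\,\delta)$, we obtain $\limsup_j d_{g_j}(x,y)\le\ell+o_\delta(1)+O(\sqrt K\,\delta)$, so letting $\delta\to0$, $\limsup_j d_{g_j}(x,y)\le d_{g_0}(x,y)$. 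Thus $d_{g_j}\to d_{g_0}$ pointwise on $M\times M$; as the $d_{g_j}$ are uniformly $\sqrt K$-Lipschitz with respect to $d_{g_0}\times d_{g_0}$ and $M$ is compact, Arzel\`a--Ascoli upgrades this to $\lambda_j:=\sup_{M\times M}|d_{g_j}-d_{g_0}|\to0$. Combined with $dV_{g_j}\to dV_{g_0}$ in total variation, the identity map $(M,d_{g_j},dV_{g_j})\to(M,d_{g_0},dV_{g_0})$ exhibits $M_j\mGHto M_0$.

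\emph{Step 3 (intrinsic flat convergence).} By Step 1, $\mass(M_j)=\vol_{g_j}(M)\to\vol_{g_0}(M)=\mass(M_0)$, so it remains to prove $d_{\mathcal F}(M_j,M_0)\to0$; then $M_j\VFto M_0$ by definition. Since $M_j$ and $M_0$ share the oriented manifold $M$, choose $\delta_j:=\sqrt{\lambda_j+1/j}\to0$ and let the ``pipe'' $Z_j:=M\times[0,\delta_j]$ carry the continuous metric $\bar g_s+ds^2$ with $\bar g_s:=(1-s/\delta_j)g_0+(s/\delta_j)g_j$. By \eqref{metricbounds}, $(1-1/j)g_0\le\bar g_s\le Kg_0$ for all $s$, so every path $\sigma=(\alpha,\beta)$ in $Z_j$ has $\length_{Z_j}(\sigma)\ge\sqrt{1-1/j}\,\length_{g_0}(\alpha)$; hence the subspace metric on $M\times\{0\}$ lies between $\sqrt{1-1/j}\,d_{g_0}$ and $d_{g_0}$, and (using $|d_{g_j}-d_{g_0}|\le\lambda_j$ from Step 2) the subspace metric on $M\times\{\delta_j\}$ lies between $d_{g_j}-\mu_j$ and $d_{g_j}$, where $\mu_j:=\lambda_j+\sqrt K\,\diam_{g_0}(M)/j\to0$. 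Thus the two boundary slices are $\mu_j$-almost isometric embeddings of $M_0$ and $M_j$. The $(m{+}1)$-current given by integration over $Z_j$ has boundary equal to the signed difference of these two slices and mass $\int_0^{\delta_j}\vol_{\bar g_s}(M)\,ds\le\delta_j\,K^{m/2}\vol_{g_0}(M)\to0$. Feeding this ``pipe'' as the filling into the standard estimate for the intrinsic flat distance between integral current spaces admitting $\mu_j$-almost isometric embeddings into a common metric space (cf.\ Sormani--Wenger and Lakzian--Sormani), $d_{\mathcal F}(M_j,M_0)\le\delta_j\,K^{m/2}\vol_{g_0}(M)+E(\mu_j;K,\diam_{g_0}(M),\vol_{g_0}(M))$ with $E\to0$ as $\mu_j\to0$; hence $d_{\mathcal F}(M_j,M_0)\to0$.

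\emph{Main obstacle.} Steps 1 and 3 are, once set up, essentially routine (measure theory in Step 1; an explicit filling plus the known Sormani--Wenger / Lakzian--Sormani estimates in Step 3, where the lower bound in \eqref{metricbounds} is what keeps $\bar g_s$ uniformly nondegenerate). The crux is the $\limsup$ inequality in Step 2: the $L^1$ (or volume) hypothesis gives no control at all of lengths along the single curve $\gamma$, which has measure zero, so one must average over the whole tube of parallel curves $\gamma_s$, extract a good one by Fubini, and then pay to reconnect its endpoints to $x$ and $y$ — this is precisely where the \emph{upper} bound in \eqref{metricbounds} is indispensable and, as Examples~\ref{No L^m Conv}, \ref{VolControlDiamNotConvergent} and \ref{FConvNoGHConv} show, cannot be dropped.
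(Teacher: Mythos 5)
Your overall strategy is sound and, in its key step, genuinely different from the paper's. The paper never converts volume convergence into $L^1$ convergence of the tensors; instead its engine is Theorem~\ref{PointwiseAEConvergenceDistances}, which uses only the metric lower bound plus volume convergence, runs a coarea/normal-exponential-map argument over foliated tubes to get $\int |d_j-d_0|\to 0$ locally, and therefore only yields pointwise a.e.\ convergence of distances along a subsequence; the upper bound $K$ then enters separately through Theorem~\ref{CompactnessThmFlatandGH} (Gromov plus the Huang--Lee--Sormani appendix) to extract a uniform limit $d_\infty$, which is identified with $d_0$ by the a.e.\ convergence, followed by a subsequence-of-subsequence argument and Theorem~\ref{VF-to-mGH-thm}. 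You instead use $K$ from the start: your Step 1 equivalence (volume convergence $\Leftrightarrow$ $L^1$ convergence of $g_j$ under \eqref{metricbounds}, via the eigenvalue/Scheff\'e argument) is correct but requires the upper bound for uniform integrability, and your Step 2 Fermi-tube/Fubini slicing with endpoint reconnection at cost $O(\sqrt K\,\delta)$ is a correct and more elementary route to pointwise convergence of $d_j\to d_0$ at \emph{every} pair $(x,y)$ (no cut-locus issues, no subsequences), which equicontinuity upgrades to uniform convergence. The trade-off is that the paper's Theorem~\ref{PointwiseAEConvergenceDistances} is deliberately proved without any upper metric bound, so it is reusable in settings where $K$ is unavailable, whereas your argument is shorter but tied to the two-sided bound.

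The one genuine weak point is Step 3. The intrinsic flat distance is defined via \emph{distance-preserving} embeddings into a common complete metric space, and, as you yourself note, the two boundary slices of your interpolating pipe $Z_j$ carry induced metrics that are only $\mu_j$-close to $d_{g_0}$ and $d_{g_j}$ (interior shortcuts are possible since $\bar g_s\ge (1-1/j)g_0$ only). The ``standard estimate'' you invoke for spaces admitting merely almost-isometric embeddings, with an unspecified modulus $E(\mu_j;\dots)$, is not an off-the-shelf theorem in Sormani--Wenger or Lakzian--Sormani in that form; to make it rigorous one must first correct the metric on the connecting space so that the end embeddings become exactly distance preserving, which is precisely what the taxi-product gluing in the appendix of \cite{HLS} accomplishes. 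Fortunately your Step 2 already produces exactly the input that lemma needs, namely $\lambda_j=\sup_{M\times M}|d_{g_j}-d_{g_0}|\to 0$ together with the bi-Lipschitz bounds from \eqref{metricbounds}; citing that construction (equivalently, Theorem~\ref{CompactnessThmFlatandGH} with $d_\infty=d_0$) gives $d_{\mathcal F}(M_j,M_0)\le C_m\,2\lambda_j\,K^{m/2}\mathcal H^m_{d_0}(M)\to 0$ directly, and then $\mathcal{VF}$ and mGH follow from your volume convergence as in Theorem~\ref{VF-to-mGH-thm}. So the gap is real as written but is closed by replacing your pipe with the quoted gluing construction; no new idea is needed.
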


In previous work, the authors proved this theorem for sequences of warped Riemannian manifolds  \cite{Allen-Sormani}.   We also presented examples of warped product manifolds demonstrating that there are sequences which converge in each sense that don't converge in the other, and that there are sequences which have different limits depending upon the notion of convergence that is considered.    The convergence theorem was then applied in \cite{AHMPPW1} by the first named author with Hernandez-Vazquez, Parise, Payne, and Wang to prove Gromov's {\em Conjecture on the Stability of the Scalar Torus Rigidity Theorem} in the warped product setting.    The first named author will be applying results from this paper to prove Gromov's Conjecture in the conformal setting.   We believe there should be many other applications of this theorem as well.

In Section \ref{sect: review} we review the definitions of Gromov-Hausdorff (GH) and
metric measure (mGH) convergence, Intrinsic Flat Convergence ($\mathcal{F}$) and
Volume Preserving Intrinsic Flat ($\mathcal{VF}$) convergence and key theorems relating them
in the simplified setting where all the spaces are Riemannian manifolds.
Theorem~\ref{CompactnessThmFlatandGH} specializes a result of Gromov \cite{Gromov-metric}
and Huang-Lee-Sormani \cite{HLS} stating that Riemannian manifolds with bi-Lipschitz bounds
on their distances have subsequences which converge in the uniform, GH, and $\mathcal{F}$ sense 
to the same limit space.   Theorem~\ref{VF-to-mGH-thm} states that if a sequence of
Riemannian manifolds converges in the GH and $\mathcal{F}$ sense to the same Riemannian
limit space, and if the volumes of the manifolds converge to the volume of the limit, then 
the sequences converge in the $\mathcal{VF}$ and mGH sense as well.  With this background the
reader may proceed to read this paper and apply our results without reading any additional articles 
on any of these notions of convergence.

In Section \ref{sect: examples} we include detailed presentations of the eight examples mentioned above.  Some
are conformal to a flat torus and the rest are conformal to a standard round sphere.  The conformal factors are precisely given and the statement of each example is followed by a detailed proof of the claimed convergence and/or lack of convergence for that example.  

In Section \ref{sec:MainThmProof} we give the proof of Theorem \ref{GenGHandFlatConvMetric}. In particular this involves obtaining estimates on volume and control on distances which leads to completing the proof of the main theorems in subsection \ref{subsec:MainProof}. A key new result is given in Theorem \ref{PointwiseAEConvergenceDistances} which shows that a metric lower bound combined with volume convergence implies pointwise convergence of $d_j(p,q) \rightarrow d_0(p,q)$ for almost every $(p,q) \in M\times M$. Due to the uniform bounds on the metric assumed in Theorem \ref{GenGHandFlatConvMetric} we are then able to show uniform, GH, and $\mathcal{F}$ convergence to a length space by applying a theorem of Huang, Lee, and the second named author in the Appendix of \cite{HLS}. By combining with the pointwise almost every convergence of distances we are able to conclude that the length space guaranteed by compactness must be the metric respect to the desired background Riemannian metric.

\textbf{Acknowledgements:} We are grateful to Raquel Perales for closely reading this preprint and suggesting improvements particularly to Theorem 4.4.  We are also grateful to the other participants at the IAS Emerging Topics on Scalar Curvature and Convergence especially Misha Gromov for inspiring conversations.  The second named author would particularly like to thank Alice Chang for finding funding that enabled her to visit IAS and Princeton weekly last year. The first named author would like to thank Ian Adelstein for the invitation to speak at the Filling Volumes, Geodesics, and Intrinsic Flat Convergence conference at Yale University. The first named author would also like to thank Lan-Hsuan Huang and Maree Jaramillo for the invitation to speak at the Spring Eastern Sectional Meeting of the AMS. We would also like to thank the referee for such a careful reading of the paper with such good suggestions for improving the manuscript.

\section{Review} \label{sect: review}

It is our goal that this paper be easily read by Geometric Analysts who are not necessarily experts in the theory of Metric Spaces and Geometric Measure Theory.  We begin by reviewing the fact that $C^0$ bounds on metric tensors
provide Lipschitz bounds on distance functions: observing that $C^0$ Convergence of Riemannian Manifolds
implies Gromov Lipschitz Convergence of the Riemannian manifolds viewed as metric spaces.
We review the
notions of Gromov-Hausdorff (GH) and Intrinsic Flat Convergence ($\mathcal{F}$) in the special setting where we consider only Riemannian
manifolds and not the more singular spaces studied by Metric Geometers.   We then state and review the key compactness theorem we will be applying to prove our results
in this special case where our manifolds are Riemannian.  We next review metric measure ($mGH$) convergence
and Volume Preserving Intrinsic Flat ($\mathcal{VF}$) convergence and a key theorem relating these notions in the Riemannian setting.  Finally we discuss Lebesgue convergence of metric tensors and prior results relating this
notion to $\mathcal{F}$ and GH convergence.   We do not attempt to provide a comprehensive review but focus instead only on the results we need in this paper.   In particular we apologize for not reviewing the extensive literature on conformal convergence.

\subsection{From $C^0$ Convergence of Metric Tensors to Lipschitz Convergence of Distances}
 
Recall that a connected continuous Riemannian manifold $(M,g)$ is a metric space $(M,d)$
with a length
distance 
\be
d_g(p,q)=\inf\{L_g(C): \, C(0)=p,\, C(1)=q, \, \text{piecewise smooth}\}
\ee
where
\be
L_g(C)=\int_0^1 g(C'(s), C'(s)) ^{1/2} \, ds.
\ee

Given bounds on the metric tensor, we have Lipschitz controls on these distances:

\begin{lem}\label{dist-above}
If $g_j$ and $g_0$ are complete continuous Riemannian metrics defined on a connected manifold $M$ so that
\begin{align}
g_j(v,v) \le K g_0(v,v), \quad \forall v \in T_pM 
\end{align} then for $q_1,q_2 \in M$
\begin{align}
 d_j(q_1,q_2) \le K^{1/2} d_{g_0}(q_1,q_2).
\end{align}
\end{lem}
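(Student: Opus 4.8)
The plan is the standard change-of-metric estimate on curve lengths, followed by passage to the infimum defining $d_{g_0}$. First I would fix points $q_1, q_2 \in M$ and, given $\varepsilon > 0$, use the definition of $d_{g_0}$ to choose a piecewise smooth curve $C : [0,1] \to M$ with $C(0) = q_1$, $C(1) = q_2$ and
\be
L_{g_0}(C) = \int_0^1 g_0(C'(s), C'(s))^{1/2}\, ds \le d_{g_0}(q_1, q_2) + \varepsilon.
\ee

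The key step is the pointwise comparison of speeds. For each $s$ at which $C$ is differentiable, applying the hypothesis with $v = C'(s) \in T_{C(s)}M$ gives $g_j(C'(s), C'(s)) \le K\, g_0(C'(s), C'(s))$, and since $t \mapsto t^{1/2}$ is monotone increasing this yields $g_j(C'(s), C'(s))^{1/2} \le K^{1/2} g_0(C'(s), C'(s))^{1/2}$. Integrating over $[0,1]$ gives $L_{g_j}(C) \le K^{1/2} L_{g_0}(C)$. Since $C$ is piecewise smooth and joins $q_1$ to $q_2$ it is admissible in the infimum defining $d_j$ — here one uses that the underlying manifold $M$, and hence the class of competitor curves, is the same for $g_j$ and $g_0$. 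Therefore
\be
d_j(q_1, q_2) \le L_{g_j}(C) \le K^{1/2} L_{g_0}(C) \le K^{1/2}\big(d_{g_0}(q_1, q_2) + \varepsilon\big),
\ee
and letting $\varepsilon \to 0$ gives $d_j(q_1, q_2) \le K^{1/2} d_{g_0}(q_1, q_2)$, as claimed.

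I do not anticipate any genuine obstacle here; the statement is essentially a definitional consequence of the length structure. The only points deserving a word of care are that the pointwise tensor bound must be combined with monotonicity of the square root in order to transfer the inequality to the integrand, and that both length functionals are taken over the same admissible class of piecewise smooth curves on the fixed manifold $M$. Completeness of $g_j$ and $g_0$ is not actually used in this direction; it merely guarantees that $d_{g_0}(q_1,q_2)$ is finite so that the estimate is not vacuous.
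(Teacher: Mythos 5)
Your proposal is correct and follows essentially the same route as the paper: compare the $g_j$- and $g_0$-speeds of a competitor curve pointwise via the tensor bound, integrate to get $L_{g_j}(C)\le K^{1/2}L_{g_0}(C)$, and pass to the infimum. The only cosmetic difference is that you make the infimum step explicit with an $\varepsilon$-near-minimizer, whereas the paper simply writes ``taking the infimum over all curves''; your added remark that completeness is not actually needed here is also accurate.
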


\begin{proof}
Let $\gamma$ be a piecewise smooth curve connecting $q_1, q_2 \in M$. By the assumption that $g_j(v,v) \le K g_0(v,v)$  we can conclude that 
\begin{align}
d_{g_j}(q_1,q_2) &\le \int_{\gamma}(g_j(\gamma'(t),\gamma'(t))^{1/2}\, dt
\\&\le \int_{\gamma}(Kg_0(\gamma'(t), \gamma'(t)))^{1/2}\, dt 
\\&\le K ^{1/2} \int_{\gamma}g_0(\gamma'(t), \gamma'(t))^{1/2}\, dt =K ^{1/2} L_{g_0}(\gamma) .
\end{align}
By taking the infimum over all curves $\gamma$ we find
\begin{align}
d_{g_j}(q_1,q_2) &\le K^{1/2}d_{g_0}(q_1,q_2).
\end{align}

\end{proof}

Thus we see immediately that two-sided bounds on $g_j$ imply bi-Lipschitz bounds on $d_j$:

\begin{lem}\label{biLip-bounds}
If $g_j$ and $g_0$ are complete continuous Riemannian metrics defined on a connected manifold $M$ so that
\be
K_0 g_0(v,v) \le g_j(v,v) \le K_1g_0(v,v) \quad \forall v \in T_pM
\ee
then
\begin{align}
K_0^{1/2} d_0(q_1,q_2) \le  d_j(q_1,q_2) \le K_1^{1/2} d_{g_0}(q_1,q_2).
\end{align}
\end{lem}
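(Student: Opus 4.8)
The plan is to obtain both inequalities as immediate consequences of \lemref{dist-above}, applied once in each direction. For the upper bound, the right-hand inequality in the hypothesis is exactly the hypothesis of \lemref{dist-above} with $K = K_1$, so that lemma gives $d_j(q_1,q_2) \le K_1^{1/2} d_{g_0}(q_1,q_2)$ directly, with nothing further to verify.

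For the lower bound, I would first rewrite the left-hand inequality $K_0 g_0(v,v) \le g_j(v,v)$ as $g_0(v,v) \le K_0^{-1} g_j(v,v)$ for all $v \in T_pM$; this uses only $K_0 > 0$, which is automatic since $g_j$ is a genuine Riemannian metric. Now apply \lemref{dist-above} with the roles of the two metrics interchanged, i.e.\ with $g_0$ playing the role of the ``upper'' metric, $g_j$ the role of the reference metric, and $K_0^{-1}$ in place of $K$. This yields $d_0(q_1,q_2) \le K_0^{-1/2} d_j(q_1,q_2)$, which rearranges to $K_0^{1/2} d_0(q_1,q_2) \le d_j(q_1,q_2)$. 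Concatenating the two estimates produces the asserted chain $K_0^{1/2} d_0(q_1,q_2) \le d_j(q_1,q_2) \le K_1^{1/2} d_{g_0}(q_1,q_2)$.

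There is no real obstacle here: the content is entirely contained in \lemref{dist-above}, and the only observation needed is that that lemma is agnostic about which of its two metrics one calls $g_j$ and which one calls $g_0$, so it may be invoked symmetrically. Alternatively, one could bypass \lemref{dist-above} and argue from scratch, bounding the $g_j$-length of a piecewise smooth curve $\gamma$ joining $q_1$ and $q_2$ from below by $K_0^{1/2}$ times its $g_0$-length and from above by $K_1^{1/2}$ times its $g_0$-length, pointwise in the integrand, and then taking the infimum over all such $\gamma$; but reusing the previous lemma is cleaner and shorter.
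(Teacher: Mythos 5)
Your argument is exactly the one the paper intends: the lemma is stated as an immediate consequence of \lemref{dist-above} applied twice, once directly with $K=K_1$ and once with the roles of $g_j$ and $g_0$ interchanged and $K=K_0^{-1}$, and the paper gives no separate proof beyond the word ``immediately.'' Your write-up is correct and matches the paper's approach.
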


This can lead to a lot of confusion when people discuss Lipschitz convergence
without specifying whether they mean Lipschitz convergence of the metric tensor
or Lipschitz convergence of the metric spaces.  Gromov defined Lipschitz convergence 
in \cite{Gromov-metric} of metric spaces, $(M_j,d_j)$ to $(M_0,d_0)$ if there exist
bi-Lipschitz maps from $(M_j,d_j)$ to $(M_0,d_0)$ whose bi-Lipschitz constants converge to $1$.
In other words:
\be
C^0 \textrm{ convergence of } (M,g_j) \to (M,g_0) 
\ee
implies
\be
\textrm{ Gromov Lipschitz convergence of  } (M,d_j) \to (M,d_0).
\ee

A weaker notion of convergence is the uniform convergence of the distance
functions as functions $d_j: M\times M \to [0,D]$ where $D$ is a uniform
upper bound on the diameter of $(M, d_j)$.  We will discuss this uniform convergence
more later.

\subsection{Gromov-Hausdorff Convergence}

The Gromov-Hausdorff (GH) convergence of compact
metric spaces was defined  by Gromov \cite{Gromov-metric} as a way of
weakening the notion of uniform convergence to sequences of distinct metric
spaces.   Although it has been defined for metric spaces it has
proven to be very useful when studying Riemannian manifolds as well.

Recall that a distance preserving map, $\varphi: (M,d) \to (Z, d_Z)$, satisfies
\be
d_Z(\varphi(p), \varphi(p'))=d(p,p') \qquad \forall p,p' \in M.
\ee
In metric geometry books like \cite{Gromov-metric} this is called an ``isometric embedding"
however the notion does not agree with the Riemannian notion of an ``isometric embedding".
For example, the map from the circle to Euclidean space $F: {\mathbb S}^1 \to {\mathbb E}^2$, defined by
\be
F(\theta)=(\cos(\theta), \sin(\theta))
\ee
is a Riemannian isometric embedding but it is not distance preserving because
$
d_{{\mathbb S}^1}(\theta,\theta')
$
is an arclength along the circle while
$
d_{{\mathbb E}^2}(F(\theta'),F(\theta'))$
is the length of a line segment.

The Gromov-Hausdorff (GH) distance between compact metric spaces is then defined
\be
d_{GH}((M_1, d_1), (M_2, d_2)) = \inf d^Z_H(\varphi_1(M_1), \varphi_2(M_2))
\ee
where the infimum is over all compact metric spaces $Z$ and all distance preserving maps
$\varphi_i: M_i\to Z$ and 
$
d^Z_H
$
is the Hausdorff distance between subsets in $Z$:
\be
d_Z^H(A_1, A_2) = \inf\{r:\, \, A_1\subset T_r(A_2)\textrm{ and } A_2 \subset T_r(A_1)\}.
\ee
Here $T_r(A)$ is the tubular neighborhood of radius $r$ about $A$.  So  
\be
A_1\subset T_r(A_2) \iff \forall x_1 \in A_1 \,\,\exists x_2 \in A_2 \textrm{ s.t. } d_Z(x_1,x_2)< r .
\ee

This then defines 
\be
(M_j, d_j)\GHto (M_\infty, d_\infty) \iff d_{GH}((M_j, d_j), (M_\infty, d_\infty)) \to 0.
\ee
This is true iff $\exists$ compact $Z_j$ and distance preserving maps, $\varphi_j: M_j \to Z_j$,
and $\varphi_j': M_\infty \to Z_j$ such that
\be
d_H^{Z_j}(\varphi_j(M_j), \varphi'_j(M_\infty)) \to 0.
\ee

One says that a map,  $\psi: M_1 \to M_2$, is a $\delta$ almost isometry if is it 
$\delta$-almost distance preserving:
\be
|d_2(\psi(p), \psi(p'))-d_1(p,p')| < \delta  \qquad \forall p,p' \in M.
\ee
and is $\delta$ almost onto:
\be
M_2 \subset T_\delta(\psi(M_1)).
\ee
Gromov proved (cf. Corollary 7.3.28 in \cite{BBI} ) that if there is a $\delta$ almost isometry, $\psi: M_1 \to M_2$,
then
\be
d_{GH}(M_1, M_2) < 2\delta.
\ee

In this paper we have a fixed background manifold $M$ for the whole sequence, and we will
see that having this fixed $M$ allows one to define a sequence of common metric spaces $Z_j$
more easily if one has the right hypotheses on the metric tensors.   We will also present some examples
demonstrating that one may not obtain GH convergence when the hypotheses of our main theorem fail
to hold. See the recent work of Aldana, Carron, and Tapie \cite{Aldana-Carron-Tapie} where the authors are able to show GH compactness for sequences of conformal Riemannian manifolds with integral bounds on the scalar curvautre and bounds on the volume.

\subsection{Intrinsic Flat Convergence}

The intrinsic flat distance (${\mathcal{F}}$) defined by Sormani-Wenger in \cite{SW-JDG} is defined for
a large class of metric spaces called integral current spaces.   In their paper they show that ${\mathcal{F}}$ 
is a weaker notion
than Gromov Lipschitz convergence that is distinct from GH convergence and can give different limits.  
In this paper we will explore this further.

As this article is intended for Riemannian geometers, we provide the
definition of ${\mathcal{F}}$ convergence
in the setting where the metric spaces are compact oriented Riemannian manifolds, $(M, g_0)$,
endowed with distance functions, $d_j$, that satisfy
\be\label{d_j-basic}
\lambda \ge \frac{d_j(p,q)}{d_0(p,q)} \ge \frac{1}{\lambda}.
\ee
where $d_0=d_{g_0}$ is the length distance associated to $g_0$ as above.
The distance functions need not arise from Riemannian metrics as long as
they satisfy (\ref{d_j-basic}).  They might for example be the taxi metric on a torus:
\be
({\mathbb T}^2, d_{taxi})= {\mathbb S}^1 \times_{taxi} {\mathbb S}^1
\ee
where
\be
d_{taxi}(p,p')= d_{{\mathbb S}^1}(p_1,p'_1) + d_{{\mathbb S}^1}(p_2, p'_2)
\ee
or other taxi products.

The intrinsic flat (${\mathcal{F}}$) distance is defined similarly to
the Gromov-Hausdorff distance.  Again we are taking an infimum over
all distance preserving maps into a common metric space $Z$ which is now
assumed to be complete instead of compact.  Instead of taking the Hausdorff distance we measure the
Federer-Fleming Flat Distance between the images.  In full generality this flat distance is defined using
Ambrosio-Kirchheim's mass measure of integral currents, $A$ and $B$, lying in $Z$ \cite{SW-JDG} \cite{AK}.   

Here  
we can  take $Z=M\times I$ (where $I$ is an interval )with a well chosen metric $d_Z$
so that $\varphi_i: M_i\to Z$ are distance preserving maps such that
\be
\varphi_0(x)=(x,0) \in Z \quad \forall x \in M_0\textrm{ and } \varphi_1(x)=(x,1)\in Z \quad \forall x\in M_1.
\ee
Since we consider only $M$ without boundary
we can set the filling current, $B$, to be integration over $M\times I$ and estimate its mass
using the Hausdorff measure:
\be
d_{{\mathcal{F}}}((M, d_1), (M, d_0))  \le C_m \mathcal{H}^{m+1}_{d_Z}(M\times I).
\ee
This is an over estimate for the ${\mathcal{F}}$ distance but it suffices to show ${\mathcal{F}}$ convergence 
in this paper.

Note that in general both GH and ${\mathcal{F}}$ convergence are well defined for changing sequences of metric
spaces and have compactness theorems defined in those settings.  It is well known that even if one starts with a 
sequence of oriented Riemannian manifolds $M_j=(M, g_j)$ that the ${\mathcal{F}}$ and GH limits need not even be Riemannian
manifolds.  See \cite{Gromov-metric} and \cite{SW-JDG} for these examples.   It should also be noted that 
GH and ${\mathcal{F}}$ limits need not agree.   There are many examples with
no GH limit, that have ${\mathcal{F}}$ limits \cite{SW-JDG}.

If a sequence has a compact GH limit and the sequence  has a uniform upper bound on volume, 
then a subsequence has a ${\mathcal{F}}$ limit and the
${\mathcal{F}}$ limit is either the zero space or a subset of the GH limit (See Theorem 3.20 in \cite{SW-JDG}). 
Note that if $M_j$ has $\vol_j(M_j)\to 0$ then $M_j$ converges in the intrinsic flat sense to the zero space.  This can also happen without volume converging to $0$ (See the Appendix to \cite{SW-JDG}).   To prove that the limit is not the zero space,
one can examine sequences of balls in $M_j$.  By Lemma 4.1 of \cite{Sormani-ArzAsc} one sees that if $M_j \Fto M_\infty$
and $B(p_j,R) \in M_j$ endowed with the restricted metric $d_j$ converge to a limit $B_\infty(R)$, then for almost every $r\in(0,R)$, $B_\infty(r)$ is isometric to a ball of radius $r$ in $M_\infty$.  In particular if $B(p_j,R)$ do not converge to the zero space then $M_\infty$ is not the zero space either. 

There are also theorems which provide hypotheses proving that ${\mathcal{F}}$ and GH limits exist and agree: see the work of Wenger, Matveev, Portegies, Perales, Nu\~nez-Zimbron, Huang, Lee,
and the second author in \cite{SW-CVPDE} \cite{Portegies-Sormani1} \cite{MatveevPortegies1} \cite{PeralesNunez-Zimbron1} \cite{HLS}.   In the next subsection we describe the key result of Huang, Lee, and the second author that
we apply to prove the main theorems in this paper.

\subsection{Useful Compactness Theorem}\label{subsec:CompactnessThm}

All the results in our paper apply the following compactness theorem to prove intrinsic flat (${\mathcal{F}}$) and Gromov-Hausdorff (GH) convergence of our Riemannian manifolds.  This theorem is an easy consequence of a theorem of Gromov in \cite{Gromov-metric} and a theorem by Huang, Lee, and the third author in \cite{HLS}.   Since both of those earlier theorems are stated in far greater generality than we need here, we simplify things by providing a direct proof here.  Note
that everything is much easier because we assume the spaces are Riemannian manifolds.

\begin{thm}\label{CompactnessThmFlatandGH}
If $M$ is a compact oriented Riemannian manifold with a 
sequence of continuous Riemannian metric tensors, $g_j$, and a background
Riemannian metric, $g_0$, such that 
\begin{align}
\lambda_1 g_0 \le g_j &\le \lambda_2 g_0 ,
\end{align}
 then a subsequence converges in the uniform sense 
 \be
 d_j : M \times M \to {\mathbb R} \quad \rightarrow \quad d_{\infty}: M \times M \to {\mathbb R} 
 \ee
  so that for some $K > 0$
 \begin{align}
 d_0(q_1,q_2) \le d_{\infty}(q_1,q_2) \le K d_0(q_1,q_2).
 \end{align}
 In addition,  for $M_j=(M,g_j)$ and $M_{\infty}=(M, d_\infty)$ we find
\begin{align}
M_j &\Fto M_{\infty}
\\M_j &\GHto M_{\infty}.
\end{align}

\end{thm}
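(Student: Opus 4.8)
The plan is to extract the uniform limit of the distance functions first, and then feed that limit into the general compactness machinery of \cite{HLS} to upgrade to $\mathcal{F}$ and GH convergence. First I would observe that by Lemma~\ref{biLip-bounds} the bounds $\lambda_1 g_0 \le g_j \le \lambda_2 g_0$ give the uniform bi-Lipschitz estimate $\lambda_1^{1/2} d_0(q_1,q_2) \le d_j(q_1,q_2) \le \lambda_2^{1/2} d_0(q_1,q_2)$ for all $j$ and all $q_1,q_2 \in M$. In particular the family $\{d_j\}$ is uniformly bounded on $M\times M$ (by $\lambda_2^{1/2}\diam_{g_0}(M)$) and uniformly equicontinuous: for points $q_1,q_1',q_2 \in M$, the triangle inequality together with the upper bound gives $|d_j(q_1,q_2)-d_j(q_1',q_2)| \le d_j(q_1,q_1') \le \lambda_2^{1/2} d_0(q_1,q_1')$, and similarly in the second variable, so the modulus of continuity of $d_j: M\times M \to \mathbb{R}$ is controlled independently of $j$ by that of the fixed metric $d_0$. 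By the Arzel\`a--Ascoli theorem applied on the compact metric space $(M\times M, d_0\times d_0)$, a subsequence of $d_j$ converges uniformly to some continuous function $d_\infty: M\times M \to \mathbb{R}$.

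Next I would check that $d_\infty$ is itself a metric on $M$ satisfying the asserted bounds. Symmetry and the triangle inequality pass to the uniform limit immediately; the bound $d_0(q_1,q_2) \le d_\infty(q_1,q_2) \le K d_0(q_1,q_2)$ (with $K$ one may take to be $\lambda_2^{1/2}$, while the lower bound uses $\lambda_1 \ge $ something forcing $\lambda_1^{1/2}\ge 1$ — or more carefully one just records whatever two-sided bound the hypotheses yield and passes it to the limit) follows by taking $j\to\infty$ in the bi-Lipschitz estimate above; in particular $d_\infty(q_1,q_2)=0 \iff q_1=q_2$, so $(M,d_\infty)$ is a genuine compact metric space. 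So we have established the uniform-convergence claim.

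For the $\mathcal{F}$ and GH convergence I would invoke the theorem of Huang--Lee--Sormani in the Appendix of \cite{HLS} (quoted in the discussion preceding this theorem): a sequence of Riemannian manifolds on a fixed $M$ whose distance functions are uniformly bi-Lipschitz to a fixed background distance, and whose distance functions converge uniformly to a limit distance $d_\infty$, converges in both the GH and $\mathcal{F}$ sense to $(M,d_\infty)$. Concretely, one can realize this by building the common metric space $Z_j = M\times[0,1/j]$ with a metric interpolating between $d_j$ on $M\times\{0\}$ and $d_0$-compatibly chosen endpoints — more simply, since $d_j\to d_\infty$ uniformly, the identity map $(M,d_j)\to (M,d_\infty)$ is a $\delta_j$-almost isometry with $\delta_j=\|d_j-d_\infty\|_\infty \to 0$, which by Gromov's estimate $d_{GH}(M_1,M_2)<2\delta$ quoted above gives $M_j\GHto M_\infty$ directly. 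For the $\mathcal{F}$ convergence one uses the filling $B$ = integration over $M\times[0,1/j]$ with a warped metric $d_{Z_j}$ on $M\times[0,1/j]$ that restricts to $d_j$ and $d_\infty$ on the two boundary slices and whose fibers have length $O(1/j)$; then $d_{\mathcal{F}}(M_j,M_\infty)\le C_m \mathcal{H}^{m+1}_{d_{Z_j}}(M\times[0,1/j]) \to 0$ because, by the two-sided bi-Lipschitz control, $\mathcal{H}^{m+1}_{d_{Z_j}}(M\times[0,1/j])$ is bounded by a constant (depending on $\lambda_1,\lambda_2,\diam_{g_0}(M),\vol_{g_0}(M)$) times $1/j$. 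The main obstacle — the only genuinely nontrivial point — is the construction of the admissible interpolating metric $d_{Z_j}$ on $M\times[0,1/j]$ together with the verification that the two inclusions are distance preserving and that the Hausdorff measure of the filling is $O(1/j)$; this is exactly the content of the Huang--Lee--Sormani argument in \cite{HLS}, so rather than reproduce it I would cite it, noting that the uniform bi-Lipschitz bounds make all the relevant quantities uniform in $j$.
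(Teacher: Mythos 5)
Your proposal is correct and follows essentially the same route as the paper: bi-Lipschitz control on $d_j$ from Lemma~\ref{biLip-bounds}, Arzel\`a--Ascoli for uniform subconvergence of $d_j\to d_\infty$, Gromov's almost-isometry estimate for the GH claim, and the Huang--Lee--Sormani appendix construction of the interpolating space for the $\mathcal{F}$ claim. The only nitpick is that the interpolating cylinder must have width $\varepsilon_j=\sup_{p,q}|d_j(p,q)-d_\infty(p,q)|$ rather than an arbitrary $1/j$, since otherwise the two boundary inclusions need not be distance preserving; this is exactly how the cited construction in \cite{HLS} (and the paper's proof) sets it up, so deferring to that reference closes the point.
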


Note that in general uniform convergence of $d_j\to d_\infty$ does not imply $\mathcal{F}$ convergence
due to the possibility of developing a cusp singularity.  This can be seen in Example 3.4 of the first author's paper with Bryden on Sobolev bounds and the convergence of manifolds \cite{Allen-Bryden}.
 
\begin{proof}
First we recall that the Riemannian distances, $d_j$, defined by $g_j$ satisfy the
following bi-Lipschitz bound for some $\lambda$:
\be\label{d_j}
\lambda \ge \frac{d_j(p,q)}{d_0(p,q)} \ge \frac{1}{\lambda}
\ee
(cf. Lemma~\ref{biLip-bounds}).
Note immediately that there is now a uniform upper bound $D$ on the diameter of all the $M_j$,
\be
\diam_{g_j}(M)\le D= \lambda \diam_{g_0}(M).
\ee 
As in Gromov's argument in \cite{Gromov-metric}, applying the Arzela-Ascoli Theorem, a subsequence of the 
\be
d_j: M \times M \to [0,D] \textrm{ converges to } d_\infty: M \times M \to [0,D]
\ee
and it can then be verified that $d_\infty$ is a length metric satisfying  (\ref{d_j}).    So we have
the claimed uniform convergence.  

Gromov then explains in \cite{Gromov-metric} how this uniform convergence implies GH convergence.
In fact Gromov-Hausdorff convergence is an extension of the notion of uniform convergence.   We do not
apply his proof.

In the Appendix to \cite{HLS}, Huang, Lee, and the second author prove both GH and ${\mathcal{F}}$ convergence
by constructing a common metric space 
\be
Z_j= [-\varepsilon_j, \varepsilon_j] \times M.
\ee
where
\be\label{epsj}
\epsilon_j= \sup\left\{|d_j(p,q)-d_\infty(p,q)|:\,\, p,q\in M\right\} 
\ee
with a metric $d'_j$ on $Z_j$ created by
gluing together a pair of taxi products
\be
(Z_j^-, d'_j)= [- \varepsilon_j, 0] \times_{taxi} (M, d_j)
\ee
and
\be
(Z_j^+, d'_j)= [0, \varepsilon_j] \times_{taxi} (M, d_\infty).
\ee
In the lemma in the Appendix of \cite{HLS} they prove
there are distance preserving maps 
$\varphi_j:(M, d_j)\to (Z_j, d'_j)$ and
$\varphi_j':(M, d_\infty)\to (Z_j, d'_j)$ such that
\be
\varphi_j(p)=(-\varepsilon_j, p)
\textrm{ and } \varphi'_j(p)=(\varepsilon_j, p).
\ee
They also show
\be\label{d0'}
d'_j(z_1,z_2) \le d'_0((t_1,p_1),(t_2, p_2)):= |t_1-t_2|+ \lambda d_0(p_1, p_2).
\ee

Huang, Lee, and the third author then apply (\ref{d0'}) 
to observe that every point 
\be
(p, -\varepsilon_j)=\varphi_j(p) \in \varphi_j(M_j)
\ee
has a point
\be
(p, \varepsilon) =\varphi_j'(p)\in \varphi_j'(M_\infty)
\ee
such that
\be
d_j'((p, -\varepsilon_j), (p, \varepsilon) )= 2\varepsilon_j
\ee
and visa versa.   Thus
\be
d_{GH}(M_j, M_\infty) \le d_H^{Z_j}(\varphi_j(M_j), \varphi_j'(M_\infty)) = 2 \varepsilon_j \to 0.
\ee
To estimate the intrinsic flat distance one then needs only estimate
\begin{eqnarray}
d_{{\mathcal{F}}}(M_j, M_\infty) &\le& C_m \mathcal{H}^{m+1}_{d'_0}(Z_j) \\
&\le& C_m 2\varepsilon_j  \lambda^m \mathcal{H}^{m}_{d_0}(M)
\end{eqnarray}
More details on this with more precise constants appear in the Appendix to \cite{HLS}.
\end{proof}

\subsection{Metric Measure Convergence and Volume Preserving Intrinsic Flat Convergence}

The notion of metric measure convergence first introduced by Fukaya in \cite{Fukaya-mm},
and studied by Cheeger-Colding in \cite{Cheeger-Colding-1}, and generalized by Sturm to an
intrinsic Wasserstein distance in \cite{Sturm-mm-1,Sturm-mm-2} is defined on a large class of spaces as well.  If one
has a sequence of compact oriented Riemannian manifolds converging in the GH sense such that whenever points
$p_j \in M_j$ converge to $p_\infty\in M_\infty$ the volumes of the balls around them converge
to the measure of the limit ball,
\be
\vol(B(p_j,r)) \to \mu(B(p_\infty,r))
\ee 
then one has metric measure convergence in all these respects to $(M_\infty, d_\infty, \mu)$.
One might ask how $p_j \in M_j$ converge to $p_\infty\in M_\infty$ when they do not lie in a common
space.  This is said to hold when there are distance preserving maps $\varphi_j: M_j \to Z$ and
$\varphi_\infty: M_\infty \to Z$ such that
\be\label{mm-Fukaya}
d_H^{Z}(\varphi_j(M_j), \varphi_\infty(M_\infty)) \to 0 \textrm{ and } \varphi_j(p_j)\to \varphi_\infty(p_\infty).
\ee
The existence of such a common compact $Z$ was proven in general by Gromov in \cite{Gromov-poly} whenever a
sequence converges in the GH sense to a compact limit.    The existence of a common complete $Z$
was proven in \cite{SW-JDG} for intrinsic flat converging sequences and studied further in \cite{Sormani-ArzAsc}.

A common $Z$ that works for both ${\mathcal{F}}$ and GH convergence in the setting of Theorem~\ref{CompactnessThmFlatandGH} was constructed by the authors in the
Appendix to \cite{Allen-Bryden} by attaching the many $Z_j$ like pages along the
$M_\infty$ edge.  There we saw that if we have $(M, g_j)$ and take $p_j=p$ a fixed point in $M$
viewed in each $M_j$ then $p_\infty=p$ as well now viewed in $M_\infty$.  Thus one can simplify the
definition in our setting to say that
$M_j=(M,g_j)$ converges in the metric measure sense to $(M, d_\infty)$ if it converges in the Gromov-Hausdorff
sense and 
\be\label{mm-here}
\vol_j(B(p,r))\to \mathcal{H}^m_{d_\infty}(B(p,r)) \qquad \forall p \in M \qquad \forall r>0.
\ee

Volume preserving intrinsic flat ($\mathcal{VF}$) convergence is defined 
\be \label{VF-defn}
M_j \VFto M_\infty \iff M_j\Fto M_\infty \textrm{ AND } \vol_{g_j}(M_j)\to \vol_{g_\infty}(M_\infty).
\ee
when the sequence and the limit are both Riemannian manifolds.  This has been studied
by Jauregui and Lee in \cite{Jauregui-Lee-SWIF}.  It should be noted that the second author and Wenger
proved in \cite{SW-JDG} (see also \cite{Sormani-ArzAsc} \cite{Portegies-Sormani1}) that if
$M_j \Fto M_\infty$ and $p_j \to p_\infty$ then for almost every $r>0$
$B(p_j,r) \Fto B(p_\infty,r)$ and 
\be
\liminf_{j\to \infty} \vol_{g_j}(B(p_j, r) ) \ge \vol_{g_\infty}(B(p_\infty,r)).
\ee
If one requires $\vol_{g_j}(M_j)\to \vol_{g_\infty}(M_\infty)$ then no balls can drop in volume in the limit.
Thus
\be
(M_j, g_j) \VFto (M_\infty, g_\infty) \implies \vol_{g_j}(B(p_j, r) ) \to \vol_{g_\infty}(B(p_\infty,r)).
\ee
In particular if we also have $(M_j,g_j) \GHto (M_\infty, g_\infty)$ then we have
$(M_j,g_j) \mGHto (M_\infty, g_\infty)$.

We summarize this in a theorem which should be viewed as a simplification of known theorems:

\begin{thm} \label{VF-to-mGH-thm}
Let $M$ be a compact oriented manifold and $M_j=(M,g_j)$ and $M_{\infty}=(M, g_\infty)$ be Riemannian manifolds. If $M_j \GHto M_{\infty}$, $M_j \Fto M_\infty$, and
\be
\vol_{g_j}(M_j)\to \vol_{g_\infty}(M_\infty)
\ee
then $M_j \VFto M_\infty$ and $M_j \mGHto M_\infty$.
\end{thm}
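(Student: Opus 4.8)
The first conclusion $M_j\VFto M_\infty$ requires no argument: by the definition (\ref{VF-defn}) of volume preserving intrinsic flat convergence between Riemannian manifolds it is exactly the conjunction of the two hypotheses $M_j\Fto M_\infty$ and $\vol_{g_j}(M_j)\to\vol_{g_\infty}(M_\infty)$. So the whole content lies in the mGH statement. By the criterion (\ref{mm-here}) --- with all balls and volumes compared inside a common metric space realizing both the GH and $\mathcal F$ convergence in which a fixed $p\in M$ is tracked across all the $M_j$ and $M_\infty$, as constructed in the Appendix of \cite{Allen-Bryden} --- it is enough to prove
\[
\vol_{g_j}\big(B_j(p,r)\big)\ \longrightarrow\ \vol_{g_\infty}\big(B_\infty(p,r)\big)\ =\ \hm^m_{d_\infty}\big(B_\infty(p,r)\big)\qquad\text{for all }p\in M,\ r>0,
\]
where $B_j,B_\infty$ denote balls with respect to $d_j,d_\infty$. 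The ``$\ge$'' half is already known: by the result of Sormani and Wenger recalled above (\cite{SW-JDG}; see also \cite{Sormani-ArzAsc}, \cite{Portegies-Sormani1}), since $M_j\Fto M_\infty$ and the basepoint converges to itself, for almost every $r$ one has $B_j(p,r)\Fto B_\infty(p,r)$ and $\liminf_{j\to\infty}\vol_{g_j}(B_j(p,r))\ge\vol_{g_\infty}(B_\infty(p,r))$.

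The real work is to promote this one-sided, almost-every-$r$ inequality to a genuine limit, and this is where $\vol_{g_j}(M_j)\to\vol_{g_\infty}(M_\infty)$ enters, to keep volume from escaping to the complement of the ball. First I would apply the same lower bound once more, to the restriction of $M_j$ to the open set $\{d_j(p,\cdot)>r\}$: for almost every $r$ this is $M_j$ minus its restriction to $\overline{B_j(p,r)}$, and the latter restrictions flat-converge to $M_\infty$ restricted to $\overline{B_\infty(p,r)}$ by the slicing result (Lemma 4.1 of \cite{Sormani-ArzAsc}), so the complements flat-converge as well, giving
\[
\liminf_{j\to\infty}\vol_{g_j}\big(M\setminus\overline{B_j(p,r)}\big)\ \ge\ \vol_{g_\infty}\big(M\setminus\overline{B_\infty(p,r)}\big)\qquad\text{for a.e. }r.
\]
Now I would combine the two displays. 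The monotone function $r\mapsto\vol_{g_j}(B_j(p,r))$ is continuous off a countable set, so for almost every $r$ the geodesic spheres $\partial B_j(p,r)$ are simultaneously $\vol_{g_j}$-null and $\vol_{g_\infty}(\partial B_\infty(p,r))=0$; for such an $r$, along any subsequence on which $\vol_{g_j}(B_j(p,r))$ and $\vol_{g_j}(M\setminus\overline{B_j(p,r)})$ both converge, the two limits each dominate the corresponding limit-space volume while summing, by $\vol_{g_j}(M)\to\vol_{g_\infty}(M)$, to $\vol_{g_\infty}(M)=\vol_{g_\infty}(B_\infty(p,r))+\vol_{g_\infty}(M\setminus\overline{B_\infty(p,r)})$; hence each equals its limit-space volume. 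Thus $\vol_{g_j}(B_j(p,r))\to\vol_{g_\infty}(B_\infty(p,r))$ for almost every $r$ and every $p$.

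Finally I would remove the ``almost every'': for fixed $p$ and $r_0>0$, sandwich $B_j(p,r_0)$ between $B_j(p,r_k)$ and $B_j(p,r_k')$ for good radii $r_k\uparrow r_0$ and $r_k'\downarrow r_0$, let $j\to\infty$ and then $k\to\infty$, and use that $r\mapsto\vol_{g_\infty}(B_\infty(p,r))$ is continuous (the geodesic spheres of the smooth metric $g_\infty$ are $\vol_{g_\infty}$-null for every $r$) to squeeze $\vol_{g_j}(B_j(p,r_0))$ to $\vol_{g_\infty}(B_\infty(p,r_0))$. By (\ref{mm-here}) this is $M_j\mGHto M_\infty$, which finishes the proof. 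The one genuinely delicate step is the promotion in the middle paragraph: flat convergence delivers only the lower bound on ball volume, since under a flat limit volume can concentrate and then be discarded, so the matching upper bound must be obtained indirectly --- by bounding the volume of the complement from below and feeding in the global hypothesis $\vol_{g_j}(M_j)\to\vol_{g_\infty}(M_\infty)$. If one prefers, this can be replaced by a citation to a packaged result stating that $\mathcal F$-convergence of Riemannian manifolds together with convergence of total volume already implies convergence of all ball volumes (cf. \cite{Sormani-ArzAsc}, \cite{Portegies-Sormani1}, \cite{Jauregui-Lee-SWIF}), in which case the whole argument reduces to a short deduction from the definitions.
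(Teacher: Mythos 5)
Your proposal is correct and follows essentially the same route as the paper, which states this theorem as a summary of the discussion immediately preceding it: $\mathcal{VF}$ convergence holds by definition (\ref{VF-defn}), and mGH convergence follows from the Sormani--Wenger lower semicontinuity of ball volumes under $\mathcal{F}$ convergence together with the observation that total volume convergence prevents any ball from dropping volume in the limit. Your middle paragraph (the complement/squeeze argument) and the final sandwiching over radii simply make explicit the paper's one-line assertion that ``no balls can drop in volume in the limit,'' so the two arguments coincide in substance.
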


\subsection{Lebesgue Convergence of Riemannian Manifolds}

Above we have reviewed four geometric notions of convergence that are all weaker than $C^0$ convergence of
metric tensors.  However geometric analysts will find it more natural to weaken the notion of $C^0$ convergence to
$L^p$ convergence of metric tensors.   Indeed when studying sequences of Riemannian manifolds it can be easier 
for geometric analysts to prove $L^p$ convergence rather than GH or $\mathcal{F}$ convergence. Our results here can be thought of as analogous to how in PDEs one will start by obtaining $L^p$ bounds in order to use this control to then bootstrap up to stronger control in the future.

Here we review the definition of $L^p$ norm for Riemannian metrics and discuss important properties for this paper. See the work of Clarke \cite{Clarke} for these definitions used for a study of the space of Riemannian metrics on a fixed background manifold with respect to the $L^2$ topology.

If we consider the compact manifold $M$ and the Riemannian metrics $(M,g_0)$ and $(M,g_1)$ then we can define the $L^p$, $p \ge 1$ norm of $g_1$ with respect to the background metric $g_0$ to be
\begin{align}
    \|g_1\|_{L^p_{g_0}(M)} = \lp \int_M |g_1|_{g_0}^p dV_{g_0} \rp^{1/p}.
\end{align}
Notice that this is just the usual definition of the $L^p$ norm on $(M,g_0)$ for the function $|g_1|_{g_0}$
which is defined by letting $\lambda_1^2,...,\lambda_m^2$ be the eigenvalues of $g_1$ with respect to $g_0$ with corresponding eigenvectors $v_1,...,v_m$ so that
\begin{align}
    g_1(v_i,v_i) = \lambda_i^2 g_0(v_i,v_i)=\lambda_i^2, \quad 1 \le i \le m,
\end{align}
and 
\be
|g_1|_{g_0}= \sqrt{\sum_{i=1}^m \lambda_i^4} ,
\ee
the norm of $g_1$ with respect to $g_0$.
In fact, if $g_1 = f_1^2g_0$ is a conformal metric then
\begin{align}
   \|g_1\|_{L^{\frac{p}{2}}_{g_0}(M)} = m^{\frac{1}{2}} \|f_1\|_{L^p_{g_0}(M)}^2 = m^{\frac{1}{2}}\lp \int_M |f_1|^p dV_{g_0} \rp^{2/p}.
\end{align}

Hence we say that a sequence of Riemannian manifolds $(M,g_j)$ converges to the Riemannian manifold $(M,g_{\infty})$ in $L^p$ norm with respect to $g_0$ if
\begin{align}
    \|g_j - g_{\infty}\|_{L^p_{g_0}(M)} \rightarrow 0.
\end{align}

The following lemma follows from H\"older's Inequality:

\begin{lem} Let $g_j, g_0$ be continuous Riemannian metrics defined on the compact manifold $M$. For $p \in (1,\infty]$ if
\begin{align}
    \|g_j-g_0\|_{L_{g_0}^p(M)}\rightarrow 0, 
\end{align}
then
 \begin{align}
     \|g_j-g_0\|_{L_{g_0}^q(M)} \rightarrow 0, \quad  1 \le q \le p.
  \end{align}
  \end{lem}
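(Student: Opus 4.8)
The plan is to recognize this statement as the standard fact that, on a finite measure space, $L^p$ convergence implies $L^q$ convergence for every exponent $q\le p$. Here the relevant measure space is $(M,dV_{g_0})$, whose total mass $\vol_{g_0}(M)$ is finite because $M$ is compact and $g_0$ is continuous (so $dV_{g_0}$ is a continuous density on a compact manifold). Since the case $q=p$ is exactly the hypothesis, I would assume $1\le q<p$ throughout.

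In the case $p<\infty$, I would apply H\"older's inequality on $(M,dV_{g_0})$ to the pair of nonnegative functions $|g_j-g_0|_{g_0}^{\,q}$ and the constant function $1$, using the conjugate exponents $p/q$ and $p/(p-q)$. This gives
\begin{align*}
\int_M |g_j-g_0|_{g_0}^{\,q}\, dV_{g_0}
&\le \left(\int_M |g_j-g_0|_{g_0}^{\,p}\, dV_{g_0}\right)^{q/p}\vol_{g_0}(M)^{(p-q)/p},
\end{align*}
and taking $q$-th roots yields
\begin{align*}
\|g_j-g_0\|_{L^q_{g_0}(M)}\le \vol_{g_0}(M)^{\frac{p-q}{pq}}\,\|g_j-g_0\|_{L^p_{g_0}(M)}.
\end{align*}
The volume factor is a fixed finite constant, and the $L^p$ norm tends to $0$ by hypothesis, so the left-hand side tends to $0$, which is the claim.

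For the remaining case $p=\infty$, I would simply observe that $|g_j-g_0|_{g_0}\le \|g_j-g_0\|_{L^\infty_{g_0}(M)}$ almost everywhere on $M$, whence $\|g_j-g_0\|_{L^q_{g_0}(M)}\le \vol_{g_0}(M)^{1/q}\,\|g_j-g_0\|_{L^\infty_{g_0}(M)}\to 0$.

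There is essentially no obstacle in this argument: the only point that genuinely needs to be invoked is $\vol_{g_0}(M)<\infty$, which is immediate from the compactness of $M$. I note in passing that the hypothesis $p>1$ is not actually required — $p\ge 1$ together with $q\le p$ already suffices — but we retain the form as stated.
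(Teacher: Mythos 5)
Your proof is correct and is exactly the argument the paper intends: the paper states only that the lemma ``follows from H\"older's Inequality,'' and your application of H\"older with exponents $p/q$ and $p/(p-q)$ on the finite measure space $(M,dV_{g_0})$ (together with the trivial $p=\infty$ case) is precisely that standard argument spelled out.
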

  
In general the careful treatment of the power of $p$ in $L^p$ convergence is crucial. 
However when we assume a uniform upper bound on the metric tensors as in our main theorem,
the specific value of $p$ chosen in $L^p$ convergence is not particularly important:

\begin{lem}\label{LpToLq}
If $(M,g_j)$, $(M,g_0)$ are compact continuous Riemannian manifolds so that
\begin{align}
     g_j(v,v) \le K g_0(v,v), \quad \forall v \in T_pM
\end{align}
and for $p \in [1,\infty]$
\begin{align}
    \|g_j-g_0\|_{L_{g_0}^p(M)}\rightarrow 0, 
\end{align}
then
\begin{align}
    \|g_j-g_0\|_{L_{g_0}^q(M)}, \forall q \in [1,\infty].
\end{align}
\end{lem}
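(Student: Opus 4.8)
The plan is to reduce Lemma~\ref{LpToLq} to an elementary interpolation inequality for a single nonnegative function. Write $u_j := |g_j - g_0|_{g_0}$, so that $\|g_j - g_0\|_{L^q_{g_0}(M)} = \|u_j\|_{L^q_{g_0}(M)}$, and note that $(M, dV_{g_0})$ has finite total volume since $M$ is compact.

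The first step is to produce a uniform pointwise bound $0 \le u_j \le C$ on $M$ with $C = C(m,K)$. At each point, let $\lambda_1^2,\dots,\lambda_m^2$ be the eigenvalues of $g_j$ with respect to $g_0$; the hypothesis $g_j(v,v) \le K g_0(v,v)$ gives $0 < \lambda_i^2 \le K$. In a $g_0$-orthonormal frame diagonalizing $g_j$, the tensor $g_j - g_0$ has diagonal entries $\lambda_i^2 - 1$, hence $u_j = \bigl(\sum_i(\lambda_i^2-1)^2\bigr)^{1/2} \le \sqrt{m}\,\max\{1,K-1\} =: C$ (equivalently, use the triangle inequality for $|\cdot|_{g_0}$ together with $|g_j|_{g_0} \le \sqrt{m}\,K$ and $|g_0|_{g_0} = \sqrt{m}$). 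Only the upper bound on $g_j$ enters here.

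The second step is the range $1 \le q \le p$, which is exactly the preceding lemma — equivalently, H\"older's inequality on the finite-measure space $M$ gives $\|u_j\|_{L^q_{g_0}(M)} \le \vol_{g_0}(M)^{1/q-1/p}\,\|u_j\|_{L^p_{g_0}(M)} \to 0$ (with $1/p := 0$ when $p=\infty$). The third step is the range $p \le q < \infty$, where we interpolate upward using Step~1:
\[
\int_M u_j^{\,q}\,dV_{g_0} = \int_M u_j^{\,q-p}\,u_j^{\,p}\,dV_{g_0} \le C^{\,q-p}\int_M u_j^{\,p}\,dV_{g_0} = C^{\,q-p}\,\|g_j-g_0\|_{L^p_{g_0}(M)}^{\,p} \longrightarrow 0,
\]
so $\|g_j-g_0\|_{L^q_{g_0}(M)} \to 0$. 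These two steps cover every $q \in [1,\infty)$; at the endpoint $q=\infty$ the conclusion is immediate when the hypothesis already holds with $p=\infty$, and this is the only case one should read into the statement there, since for finite $p$ a bump of height $C$ supported on a set of shrinking $g_0$-volume shows that $L^p$ convergence of the metric tensors does not force uniform convergence.

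I do not anticipate a genuine obstacle: the entire content is the two-sided interpolation, with the needed $L^\infty$ bound on $|g_j - g_0|_{g_0}$ coming for free from the eigenvalue description recorded just before the lemma. The only items demanding any care are the clean extraction of that pointwise bound (including the harmless case $K<1$, which does not occur in the applications, where $K \ge 1$) and the correct interpretation of the $q=\infty$ endpoint.
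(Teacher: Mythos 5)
Your proof is correct and takes essentially the same route as the paper: H\"older downward for $q \le p$, and the uniform pointwise bound on $|g_j-g_0|_{g_0}$ (you extract $\sqrt{m}\max\{1,K-1\}$; the paper uses the cruder $2K\sqrt{m}$) to interpolate upward for $p < q < \infty$. Your caveat about the $q=\infty$ endpoint is well taken: the lemma as written asserts the conclusion for all $q\in[1,\infty]$, but the paper's proof handles only finite $q$, and indeed for finite $p$ a fixed-height bump on a set of shrinking $g_0$-volume shows the $L^\infty$ conclusion is false; since the lemma is applied only at the finite exponent $q=m/2$, this is a harmless overstatement in the paper rather than a gap in your argument.
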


\begin{proof}
We know by H\"older's inequality that 
  \begin{align}
     \|g_j-g_0\|_{L_{g_0}^q(M)} \rightarrow 0, \quad  1 \le q \le p.
  \end{align}
  Otherwise, we calculate for $q > p$
  \begin{align}
     \|g_j-g_0\|_{L_{g_0}^q(M)}^q &= \int_M|g_j-g_0|^q dV_{g_0}
     \\&= \int_M|g_j-g_0|^p |g_j-g_0|^{q-p}dV_{g_0}
     \\ &\le (2K\sqrt{m})^{q-p} \int_M|g_j-g_0|^p dV_{g_0} \rightarrow 0.
  \end{align}
\end{proof}

We now state a standard analysis result relating convergence in $L^p$ to convergence in $L^p$ norm.

\begin{lem}\label{LpToLpNorm}
Assume that $M_j^m = (M,g_j)$, $M_{\infty}=(M,g_{\infty})$, and $M_0=(M,g_0)$ are continuous Riemannian manifolds. If for $p \in [1,\infty)$
\begin{align}
    \int_M|g_j-g_{\infty}|_{g_0}^pdV_{g_0} \rightarrow 0
\end{align}
then
\begin{align}
    \|g_j\|_{L_{g_0}^p(M)} \rightarrow \|g_{\infty}\|_{L_{g_0}^p(M)}.
\end{align}
\end{lem}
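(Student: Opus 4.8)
The plan is to recognize the claimed statement as the standard fact that convergence in an $L^p$ norm forces convergence of the norms, and to check the one point that is genuinely specific to this setting: that the pointwise quantity $|\cdot|_{g_0}$ is a bona fide norm, so that Minkowski's inequality (hence the reverse triangle inequality) is available.

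First I would observe that at a fixed point $x \in M$, choosing a $g_0$-orthonormal basis of $T_xM$ identifies any symmetric bilinear form $h$ on $T_xM$ with a symmetric $m \times m$ matrix $A_h$, and then $|h|_{g_0}(x)$ is exactly the Frobenius norm $\sqrt{\operatorname{tr}(A_h^t A_h)} = \sqrt{\sum_i (\mu_i^h)^2}$ of $A_h$, where the $\mu_i^h$ are its eigenvalues. When $h = g_1$ is positive definite, these eigenvalues are the $\lambda_i^2$ from the definition above, so $|g_1|_{g_0} = \sqrt{\sum_i \lambda_i^4}$ is consistent with this. Since the Frobenius norm is orthogonally invariant, this is well defined independently of the chosen orthonormal basis, and it satisfies the pointwise triangle inequality $|h_1 + h_2|_{g_0} \le |h_1|_{g_0} + |h_2|_{g_0}$, which in particular yields $\big| |g_j|_{g_0} - |g_\infty|_{g_0} \big| \le |g_j - g_\infty|_{g_0}$ pointwise on $M$.

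Next I would apply Minkowski's inequality on $(M, g_0)$, valid for $p \in [1,\infty)$, to the sections $g_j$, $g_\infty$, and $g_j - g_\infty$ of the bundle of symmetric $2$-tensors. All three have finite $L^p_{g_0}(M)$ norm: $g_j$ and $g_\infty$ are continuous on the compact manifold $M$, so $|g_j|_{g_0}$ and $|g_\infty|_{g_0}$ are bounded, while $\|g_j - g_\infty\|_{L^p_{g_0}(M)} \to 0$ is part of the hypothesis. The reverse triangle inequality for the $L^p$ norm then gives
\begin{align}
\Big|\, \|g_j\|_{L^p_{g_0}(M)} - \|g_\infty\|_{L^p_{g_0}(M)} \,\Big|
&\le \|g_j - g_\infty\|_{L^p_{g_0}(M)} \\
&= \lp \int_M |g_j - g_\infty|_{g_0}^p \, dV_{g_0} \rp^{1/p} \to 0,
\end{align}
which is exactly the conclusion $\|g_j\|_{L^p_{g_0}(M)} \to \|g_\infty\|_{L^p_{g_0}(M)}$.

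There is no real obstacle here; the only step warranting a moment's care is the first one, namely confirming that $|\cdot|_{g_0}$ — defined above via eigenvalues of a \emph{positive-definite} metric — extends to arbitrary symmetric $2$-tensors as a genuine norm, so that Minkowski's inequality applies. Everything after that is the standard normed-space argument. I would also note that $p < \infty$ is used only through Minkowski's inequality; the analogous $L^\infty$ statement is equally true with the same one-line proof, but it is not needed in the form stated.
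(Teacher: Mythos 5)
Your proof is correct and follows essentially the same route as the paper's: the paper simply invokes the reverse triangle inequality for the $L^p_{g_0}$ norm and is done. Your added paragraph verifying that $|\cdot|_{g_0}$ is the Frobenius norm in a $g_0$-orthonormal frame — hence a genuine, basis-independent pointwise norm on symmetric $2$-tensors, so that Minkowski applies — is a sound piece of diligence that the paper leaves implicit, but it does not change the argument.
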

\begin{proof}
By the reverse triangle inequality applied to norms
\begin{align}
    \left|\|g_j\|_{L_{g_0}^p(M)} - \|g_{\infty}\|_{L_{g_0}^p(M)} \right| \le \|g_j-g_{\infty}\|_{L_{g_0}^p(M)},
\end{align}
we find that $g_j$ converging to $g_{\infty}$  in $L_{g_0}^p(M)$ implies convergence in $L_{g_0}^p(M)$ norm.
\end{proof}

In a previous paper by the authors \cite{Allen-Sormani} a comprehensive comparison of $L^p$ convergence and the uniform, GH, and $\mathcal{F}$ convergence of warped products Riemannian manifolds was given. These are Riemannian
manifolds whose metric tensor has the form $g=dt^2 +f^2(t) g_0$.  The authors gave a theorem which assumed metric bounds and $L^2$ convergence of warping factors, $f_j \rightarrow f_{0}$ which implied uniform, GH, and $\mathcal{F}$ convergence to a warped product with warping function $f_{0}$.  That theorem is now a special case of the
the Theorem~\ref{GenGHandFlatConvMetric} proven here.   

The authors also produced many examples contrasting different notions of convergence in  \cite{Allen-Sormani}.  In
particular they constructed an examples  which show that $L^p$ convergence of Riemannian manifolds need not
agree with geometric notions of convergence like GH and $\mathcal{F}$ convergence. This is due to the fact that the $L^p$ norm considers $g_j$ and $g_{0}$ to be close even if they measure the lengths of vectors very differently on a set of measure zero $S \subset M$.  In the examples where the notions of convergence disagree, the
set $S$ contains geodesics with respect to $g_{\infty}$.   It is particularly a concern if the $g_j$ are much smaller than
$g_0$ on the set $S$ providing shortcuts for the $d_j$ so that the limit of the $d_j$ ends up smaller than $d_\infty$.
There are pictures in  \cite{Allen-Sormani} illustrating exactly what is happening.  To avoid this trouble the 
hypothesis that $g_j \ge (1-1/j)g_0$ was first introduced in \cite{Allen-Sormani} rather than simply
$g_j\ge (\lambda_1) g_0$ as is needed to apply the useful compactness theorem (cf. Theorem~\ref{CompactnessThmFlatandGH}).   Example~\ref{Cinched-Sphere} within this paper demonstrates
that the same issue arises when studying conformal sequences of manifolds and so we also require this hypothesis
in our new Theorem~\ref{GenGHandFlatConvMetric}.

\section{Examples of Sequences of Conformal Manifolds}\label{sect: examples}

In this section we explore sequences of metrics on a torus that are conformal to the flat torus and metrics on a sphere that are conformal to the standard sphere.  The first three examples are all uniformly bounded and directly explore hypotheses and conclusions related to Theorem \ref{ConfGHandFlatConv}. The last five examples explore what can happen when the uniform upper bound of Theorem \ref{ConfGHandFlatConv} is removed.

We also direct the reader to the warped product examples given in our previous paper \cite{Allen-Sormani} which are relevant to Theorem \ref{GenGHandFlatConvMetric}.    

\subsection{Without Lower Bounds on the Conformal Factor}

Here we see an example which shows that without the $C^0$ bound from below one cannot prove convergence of $M_j$ to $M_0$ in Theorem \ref{ConfGHandFlatConv}.   Note examples 
demonstrating why the lower bounds are needed on the metric tensor itself 
as in Theorem \ref{GenGHandFlatConvMetric} appeared in the authors' previous paper
on warped products \cite{Allen-Sormani}. 

 \begin{ex} \label{Cinched-Sphere}  
 Consider the sequence of functions on $\Sp^m$ which are radially defined from the north pole  
 \be
 f_j(r)=
 \begin{cases}
 1 & r\in[0,\pi/2- 1/j]
 \\  h(jr-\pi/2) & r\in[\pi/2- 1/j, \pi/2+ 1/j]
 \\ 1 &r\in [\pi/2+ 1/j, \pi]
 \end{cases}
\ee
where $h:[-1,1]\rightarrow \R$ is a smooth even function such that 
$h(-1)=1$ with $h'(-1)=0$, 
decreasing to $h(0)=h_0\in (0,1)$ and then
increasing back up to $h(1)=1$, $h'(1)=0$. We will see that for $M_j = (\Sp^m, f_j^2 g_{\Sp^m})$
\begin{align}
M_j &\VFto M_{\infty}
\\M_j &\mGHto M_{\infty}
\end{align}
but we can conclude that $M_{\infty}$ is not isometric to $\Sp^m$. Instead $M_{\infty} = (\Sp^m, f_{\infty}^2 g_{\Sp^m}) $ is the conformal metric with conformal factor
 \be
 f_{\infty}(r)=
 \begin{cases}
 h_0 & r=\pi/2
 \\  1 &\text{ otherwise}
 \end{cases}.
 \ee
 The distances between pairs of points near the equator in this limit space is achieved by geodesics which run to the
 equator, and then around inside the cinched equator, and then out again.  
\end{ex}

\begin{proof}
Notice that $h_0 \le f_j \le 1$ and hence by Theorem \ref{CompactnessThmFlatandGH} we find that on a subsequence
\begin{align}
M_j &\Fto M',
\\M_j &\GHto M',
\end{align}
for some compact metric space $M'=(\Sp^m,d')$ where we have uniform convergence $d_j \rightarrow d'$.
Now our goal is to show that $M'=M_{\infty}$ by showing pointwise convergence $d_j(q_1,q_2) \rightarrow d_{\infty}(q_1,q_2)$ for all $q_1,q_2 \in \Sp^m$.   Thus $d'=d_\infty$.

Let $q_1,q_2 \in \Sp^m$ and consider $\gamma(t) = (r(t),\theta(t))$ to be any curve in $\Sp^m$. We can compute
\begin{align}
d_j(q_1,q_2) \le L_j(\gamma) =  \int_0^{L} f_j(r(t))\sqrt{r'(t)^2+\theta'(t)^2} dt \rightarrow L_{\infty}(\gamma)
\end{align}
where the convergence follows from the dominated convergence theorem since $f_j(r(t)) \rightarrow f_{\infty}(r(t))$ pointwise and $f_j$ is uniformly bounded. Since this is true for any curve $\gamma$ we see that 
\begin{align}
\limsup_{j \rightarrow \infty} d_j(q_1,q_2)  \le d_{\infty}(q_1,q_2)\label{limsupConv}.
\end{align}

Now let $\gamma_j(t) = (r_j(t),\theta_j(t))$ be the length minimizing curve with respect to $g_j$ defined on $t \in [0,L]$. Then if we define 
\begin{align}
    S_j = \{t \in [0,L] : \pi/2-1/j \le r_j(t) \le \pi/2 + 1/j, r_j(t) \not=\frac{\pi}{2}\},
\end{align} 
\begin{align}
    T_j =\{t \in [0,L] : r_j(t) =\frac{\pi}{2}\},
\end{align}
and 
\begin{align}
    U_j  = [0,1] \setminus (S_j \cup T_j)
\end{align} 
we can compute
\begin{align}
d_j(q_1,q_1) &= L_j(\gamma_j)
\\& =L_j(U_j) + L_j(T_j) +L_j(S_j)
\\&= L_{\infty}(U_j) + h_0 L_{\Sp^m}(T_j)+ L_j(S_j) 
\\& \ge d_{\infty}(q_1,q_2) +L_j(S_j) -  L_{\Sp^m}(S_j).
\end{align}
Now we notice
\begin{align}
|L_j(S_j) -  L_{\Sp^m}(S_j)|&\le \int_0^{L_{\Sp^m}(S_j)}
|h_j(r_j(t),\theta_j(t)) - 1|  dt \rightarrow 0
\end{align}
since $L_{\Sp^m}(S_j) \le L < \infty$, otherwise we would contradict \eqref{limsupConv}, and the dominated convergence theorem. Hence we find
\begin{align}
\liminf_{j \rightarrow \infty} d_j(q_1,q_2)  \ge d_{\infty}(q_1,q_2)\label{liminfConv}.
\end{align}
By combining \eqref{limsupConv} and \eqref{liminfConv} we find pointwise convergence of distances which implies that $(M',d') = (M_{\infty}, d_\infty)$. It is also clear that the volume converges in this example.
\end{proof}

\subsection{Conformal Tori converging as in Theorem\ref{ConfGHandFlatConv} to a Flat Torus}

Here we give an example which fits the hypotheses of the Theorem \ref{GenGHandFlatConvMetric} and comes to the same conclusion but whose conformal factors do not converge in $C^0$. This shows that one should not expect to conclude any stronger convergence in Theorem \ref{GenGHandFlatConvMetric}. 

\begin{ex}\label{No C0 Convergence}
Consider the sequence of functions on $\Tor^m$ which are radially defined from a point $p \in \Tor^m$
\begin{equation}
f_j(r)=
\begin{cases}
K &\text{ if } r \in [0,1/j]
\\h_j(jr) &\text{ if } r \in [1/j,2/j]
\\ 1 & \text{ if } r \in (1/j,\sqrt{2}\pi]
\end{cases}
\end{equation}
where $K \in (1,\infty)$ and $h_j:[1,2] \rightarrow \R$ is a smooth, decreasing function so that $h_j(1) = K$ and $h_j(2) = 1$. Then $M_j = (\Tor^m, f_j^2 g_{\Tor^m})$ converges to $\Tor^m$ in $L^p$ $\forall p \in (0,\infty)$, but does not converge to $\Tor^m$ in $C^0$, and rather
\begin{align}
M_j &\VFto \Tor^m
\\M_j &\GHto \Tor^m.
\end{align}
\end{ex}
\begin{proof}
We begin by computing
\begin{align}
\vol(M_j) &= \int_{\Tor^m}f_j^m dVol 
\\&=  Vol(B^m(p,1/j))K 
\\&\qquad+ \omega_m \int_{1/j}^{2/j} h_j(jr)^m r^{m-1} dr
\\&\qquad + Vol(\Tor^m \setminus B^m(p,2/j)\rightarrow Vol(\Tor^m),
\end{align}
\begin{align}
\diam(M_j) &= \int_0^{\sqrt{2}\pi}f_j dr \label{DiamContEx3.1} 
\\&= \frac{K}{j}+\int_{1/j}^{2/j} h_j(jr) dr + (\sqrt{2}\pi - 1/j) \rightarrow \sqrt{2}\pi,
\end{align}
and
\begin{align}
\|f_j-1\|_{L^p(\Tor^m)} &\le Vol(B(p,2/j)) (K-1)^p  
\rightarrow 0.
\end{align}

Hence $M_j \mGHto \Tor^m$ and  $M_j \VFto \Tor^m$  by Theorem \ref{ConfGHandFlatConv} but we note that $f_j$ clearly does not converge uniformly to $1$
\end{proof}

\subsection{Only Pointwise Convergence of the Conformal Factor}

Here we give an example of a sequence of conformally flat tori whose conformal factors pointwise converge on a dense set to $1$, but do not converge in $L^p$ to $1$.  The conformal factors do satisfy a lower bound $f_j \ge 1$
and a uniform upper bound, but $M_j$ do not GH nor ${\mathcal{F}}$ converge to $M_0$. This shows that the assumption of $L^p$ convergence or volume convergence in Theorem \ref{GenGHandFlatConvMetric} cannot be weakened to pointwise convergence on a dense set. 

\begin{ex}\label{PointwiseOnDenseSet}
Define the $j$th lattice
\begin{align}
    L_j= \left\{ \lp\frac{n}{2^j},r\rp \cup \lp r, \frac{m}{2^j}\rp: r \in [0,1], n,m = 1...2^j\right\},
\end{align}
and the limiting set
\begin{align}
    L = \bigcup_{j=1}^{\infty} L_j.
\end{align}
Consider $M_j=(\Tor^m,g_j=f_j^2g_0)$ with conformal factor,
\begin{align}
f_j=
    \begin{cases}
    1 & \text{ on } T_{\frac{1}{2^{j+2}}}\lp L_j\rp
    \\ \sqrt{2} &\text{ elsewhere}
    \end{cases},
\end{align}
where $T_r(S)$ represents a tubular neighborhood of radius $r>0$ around the set $S\subset \Tor^m$. 
Then
\begin{align}
    f_j(p) \rightarrow 1, \qquad p \in L
\end{align} 
pointwise on the dense set $L$,

\begin{align}
    f_j(p) \rightarrow \sqrt{2}, \qquad p \in \Tor^m\setminus L
\end{align} 
pointwise on the full measure set set $\Tor^m\setminus L$,
and 
\begin{align}
g_j \ge g_{\Tor^m},
\end{align}
but  yet $M_j$ does not converge to $\Tor^m$. Instead 
\begin{align}
    M_j \GHto \Tor^m_{taxi},
    \\ M_j \Fto \Tor^m_{taxi},
\end{align}
where  $\Tor^m_{taxi} = (\Tor^m, d_{taxi})$ is a torus with a taxi metric.
\end{ex}

\begin{proof}
First we notice that for any $p,q \in M_j$ the length minimizing geodesic from $p$ to $q$ consists of a collection of straight line segments. Furthermore, no segment should enter a region where $f_j = \sqrt{2}$, unless it begins or ends in one, since 
\begin{align}\label{TaxiObservation}
    \sqrt{2} \sqrt{x^2+y^2} \ge |x|+|y|.
\end{align}
By this observation we notice that for all $p \in M_j$ there exists $\lp \frac{n}{2^j},\frac{m}{2^j} \rp$ so that
\begin{align}
    d_j\lp p,\lp \frac{n}{2^j},\frac{m}{2^j} \rp \rp \le \sqrt{2} \lp\frac{\sqrt{2}}{2^j} \rp,\label{NetEstimate}
\end{align}
which is an overestimate of the distance from the center of the square to one of its corners. 
So \eqref{NetEstimate} shows that $L_j$ is a $\frac{1}{2^{j-1}}$ net and since by \eqref{TaxiObservation}
\begin{align}
    d_j(p,q) \rightarrow d_{taxi}(p,q) \qquad \forall p,q \in L
\end{align}
 we have that 
\begin{align}
    M_j \GHto \Tor^m_{taxi}.
\end{align} 
Then since
\begin{align}
    g_0 \le g_j \le \sqrt{2} g_0,
\end{align}
we have by Theorem \ref{CompactnessThmFlatandGH} that the ${\mathcal{F}}$ limit agrees with the GH limit.
\end{proof}

\subsection{Conformal Tori converging in $L^p$, $p < \frac{m}{\alpha}$, $\alpha \in (0,1)$}

Here we see an example which satisfies $g_j \ge g_0$ and whose conformal factor $f_j$ converges in $L^p$ norm for all $p < \frac{m}{\alpha}$, $\alpha \in (0,1)$ which is a stronger assumption than volume convergence, and hence we see that $M_j$ converges in both the mGH and $\mathcal{VF}$ sense to a flat torus. 

\begin{ex}\label{L^p Conv}
Consider the sequence of functions on $\Tor^m$ which are radially defined from a point $p \in \Tor^m$
\begin{equation}
f_j(r)=
\begin{cases}
j^{\alpha} &\text{ if } r \in [0,1/j]
\\h_j(jr) &\text{ if } r \in [1/j,2/j]
\\ 1 & \text{ if } r \in (1/j,\sqrt{2}\pi]
\end{cases}
\end{equation}
where $0< \alpha < 1$ and $h_j:[1,2] \rightarrow \R$ is a smooth, decreasing function so that $h_j(1) = j^{\alpha}$ and $h_j(2) = 1$. Then for $M_j = (\Tor^m, f_j^2 g_{\Tor^m})$ 
\begin{align}
    \|f_j-1\|_{L^p(\Tor^m)} \rightarrow 0 \text{ for }p < \frac{m}{\alpha},
\end{align}
 \begin{align}
    1 \le f_j,
 \end{align}
and
\begin{align}
M_j &\VFto \Tor^m
\\M_j &\mGHto \Tor^m.
\end{align}
\end{ex}
\begin{proof}
We begin by computing
\begin{align}
\vol(M_j) &= \int_{\Tor^m}f_j^m dVol 
\\&\le  Vol(B^m(p,2/j))j^{m \alpha} 
\\&\qquad+ \omega_m \int_{1/j}^{2/j} h_j(jr)^m r^{m-1} dr
\\&\qquad + Vol(\Tor^m \setminus B^m(p,2/j)\rightarrow Vol(\Tor^m),
\end{align}
\begin{align}
\diam(M_j) &= \int_0^{\sqrt{2}\pi}f_j dr \label{DiamContEx3.1} 
\\&= j^{\alpha -1}+\int_{1/j}^{2/j} h_j(jr) dr + (\sqrt{2}\pi - 1/j) \rightarrow \sqrt{2}\pi,
\end{align}
and
\begin{align}
\|f_j-1\|_{L^p(\Tor^m)} &\le Vol(B(p,2/j)) (j^{\alpha}-1)^p  
\\&= 2^m\omega_m \frac{(j^{\alpha}-1)^p}{j^m} 
\\&\le 2^m\omega_m j^{\alpha p-m} \rightarrow 0, \quad p < \frac{m}{\alpha}.
\end{align}

Now by Lemma \ref{biLip-bounds} since $f_j \ge 1$ we know
\begin{align}\label{DistLowerBound}
    d_j(q_1,q_2) \ge d_0(q_1,q_2) \quad \forall q_1,q_2 \in \Tor^m.
\end{align}

For $q_1,q_2 \in \Tor^m$ so that the $g_0$ length minimizing geodesic, $\gamma$, does not pass through $p$ we notice that for $j$ chosen large enough $L_j(\gamma) = L_0(\gamma)$ and hence
\begin{align}\label{FirstDistBound}
    d_j(q_1,q_2) \le L_j(\gamma) = L_0(\gamma) = d_0(p,q).
\end{align}
If $\gamma$ does pass through $p$ then it is enough to consider $p,q\in \Tor^m$ and for $j$ chosen large enough we have that $q \not \in B(p,2/j)$ and hence we can calculate
\begin{align}\label{SecondDistBound}
    d_j(p,q) &\le \int_0^{d_0(p,q)}f_j dr= 2j^{\alpha-1} + d_0(p,q)-\frac{2}{j} = d_0(p,q) + 2\frac{j^\alpha-1}{j}.
\end{align}
Now by combining \eqref{DistLowerBound},\eqref{FirstDistBound}, and \eqref{SecondDistBound} we find $d_j \rightarrow d_0$ uniformly
 and hence 
 \be
 M_j \mGHto \Tor^m.
 \ee 
 Once $M_j$ has a GH limit $M_\infty$ then by Theorem 3.20, p. 147 of \cite{SW-JDG} we know that a subsequence converges
in the $\mathcal{F}$ sense either to the $0$ space or a subset of the GH limit.   Since the $\mathcal{F}$
limit must have the same dimension as the sequence, the subsequence converges
in the $\mathcal{F}$ sense either to the $0$ space or a subset of the flat torus.   Now consider any point $q\neq p$ in
the flat torus and consider $B_{d_j}(q, r) \in M_j$ where $r<d_0(p,q)\le d_j(p,q)$.   For $j$ sufficiently large this
is a Euclidean ball, and so by Lemma 4.1 in \cite{Sormani-ArzAsc}, choosing possibly a smaller $r>0$, $B_{d_j}(q, r) \in M_j$ converges to a Euclidean ball of radius $r$ inside
the $\mathcal{F}$ limit, $M_\infty$.  Piecing together these Euclidean balls, we see that the intrinsic flat limit is a flat torus
with possibly one point removed.  But that point is added back in when one takes the metric completion and hence we find $\mathcal{VF}$ convergence.  A new proof of this limit will appear in upcoming work of the authors.

\end{proof}

\subsection{Conformal Tori which don't converge in $L^m$}

Here we see an example of a sequence of metrics on tori whose conformal factor $f_j$ is not bounded in $L^p$ for any $p > m$, is bounded in $L^m$ but does not converge in $L^m$, and does not satisfy volume convergence. This example will converge in both the GH and $\mathcal{F}$ sense to a metric space which is isometric to a flat torus with a flat disk attached so that the boundary of the disk is identified with a point in the flat torus. This shows that volume convergence is important in order to prevent bubbling. From an analytic point of view this sequence of conformal factors are converging to a measure whose mass is concentrating at a point which geometrically corresponds to a flat torus with a bubble attached at a point.

\begin{ex}\label{No L^m Conv}
Consider the sequence of functions on $\Tor^m$ which are radially defined from a point $p \in \Tor^m$
\begin{equation}
f_j(r)=
\begin{cases}
j &\text{ if } r \in [0,1/j]
\\h_j(jr) &\text{ if } r \in [1/j,2/j]
\\ 1 & \text{ if } r \in (2/j,\sqrt{m}\pi].
\end{cases}
\end{equation}
where $h_j:[1,2] \rightarrow \R$ is a smooth, decreasing function so that $h_j(1) = j$, $h_j'(1)=h_j'(2)=0$, and $h_j(2) = 1$ so that
\begin{align}
    \frac{1}{j^m}\int_1^2h_j(s)^m s^{m-1}ds \rightarrow 0.\label{ConstructionHyp}
\end{align}
Then $f_j$ is not bounded in $L^p$ norm for $p > m$ but does have bounded $L^m$ norm and volume.
Furthermore, for $M_j=(\Tor^m, f_j^2 g_{\Tor^m})$
\begin{align}
M_j &\Fto M_{\infty}
\\M_j &\GHto M_{\infty}
\end{align}
where $M_{\infty}$ is not isometric to $\Tor^m$. Instead 
\begin{align}
    M_{\infty} = \Tor^m \sqcup \mathbb{D}^m/\sim,
\end{align}
where we fix a $p \in \Tor^m$ and for $d \in \partial \mathbb{D}^m$  we have
\begin{align}
    p \sim d.
\end{align}
\end{ex}

\begin{proof}
First notice by Holder's inequality that the assumption \eqref{ConstructionHyp} implies
\begin{align}
    \frac{1}{j} \int_1^2 h_j(r)dr & \le \frac{1}{2j} \int_1^2 h_j(r)r^{\frac{m-1}{m}}dr
    \\&\le \frac{1}{2j} \lp \int_1^2 h_j(r)^m r^{m-1}dr \rp^{1/m} 
    \\&= \frac{1}{2}\lp \frac{1}{j^m}\int_1^2 h_j(r)^m r^{m-1}dr \rp^{1/m} \rightarrow 0.
\end{align}
Now we begin by computing
\begin{align}
\vol(M_j) &= \int_{\Tor^m}f_j^m dVol 
\\&= Vol(B^m(p,1/j))j^m + Vol(\Tor^m \setminus B^m(p,2/j))
\\&\qquad+ \omega_m \int_{1/j}^{2/j} h_j(jr)^m r^{m-1}dr
\\&\rightarrow Vol(B^m(p,1))+Vol(\Tor^m),
\end{align}
\begin{align}
    \|f_j\|_{L^p} \ge Vol(B^m(p,1/j))j^p = \omega_m j^{p-m} \rightarrow \infty, \quad p > m,
\end{align}
\begin{align}
\diam(M_j) &= \int_0^{\sqrt{m}\pi}f_j dr 
\\&= 1 +\int_{1/j}^{2/j} h_j(jr) dr+ (\sqrt{m} \pi  - 2/j) 
\\& \rightarrow 1+ \sqrt{m}\pi.
\end{align}
So we see that the volume and diameter do not converge to the volume or diameter of $M_0$ but we do note that $g_j \ge g_0$ by construction.

Now we would like to show that $M_j$ converges to $M_{\infty}$ in the GH and ${\mathcal{F}}$ sense. We begin by constructing a sequence of maps in order to estimate the GH distance
\begin{align}
    F_j:M_{\infty}\rightarrow M_j
\end{align}
where
\begin{align}
    F_j(\mathbb{D}^m\setminus \partial \mathbb{D}^m) &= B_{\Tor^m}(p,1/j),
    \end{align}
    is defined by scaling,
    \begin{align}
  F_j(\Tor^m \setminus B_{\Tor^m}(p,3/j) )&= \Tor^m \setminus B_{\Tor^m}(p,3/j),
  \end{align}
  is the identity map,
  \begin{align}
  F_j(B_{\Tor^m}(p,3/j)\setminus \{p\} )&= B_{\Tor^m}(p,3/j)\setminus B_{\Tor^m}(p,2/j),
  \end{align}
  is defined by radial scaling, and
  \begin{align}
  F_j(p) &= q \in \partial B_{\Tor^m}(p,2/j).
\end{align}
By construction this map is almost onto and now we will argue that this map is almost distance preserving.

\textit{Case 1:} $p_1,p_2 \in \mathbb{D}^m\setminus \partial \mathbb{D}^m \subset M_{\infty}$. 

Connect $F_j(p_1)$ to $F_j(p_2)$ via a straight line $\gamma\subset B_{\Tor^m}(p,1/j)$ and let $\alpha = \alpha_1\alpha_2\alpha_3\alpha_4\alpha_5$ be the concatenation of $\alpha_1,\alpha_5 \subset B_{\Tor^m}(p,1/j)$, $\alpha_3 \subset \partial B_{\Tor^m}(p,2/j)$ length minimizing, and $\alpha_2,\alpha_4 \subset B_{\Tor^m}(p,2/j)\setminus B_{\Tor^m}(p,1/j)$ radial curves. Then we can calculate
\begin{align}
 L_j(\alpha)&\le L_j(\alpha_1)+L_j(\alpha_5)
 \\&\qquad + 2 \int_{1/j}^{2/j} h_j(jr) dr  +\diam(\partial B_{\Tor^m}(p,2/j))\\
 &= L_{\infty}(F_j^{-1}(\alpha_1))+L_{\infty}(F_j^{-1}(\alpha_5))
 \\&\qquad + 2 \int_{1/j}^{2/j} h_j(jr) dr  +\diam(\partial B_{\Tor^m}(p,2/j)).
\end{align}

Hence  we find
\begin{align}
    d_j(F_j(p_1), F_j(p_2))&\le \min\{L_j(\gamma),\inf_{\alpha}L_j(\alpha)\} 
    \\&\le d_{\infty}(p_1,p_2)
    \\&\qquad + 2 \int_{1/j}^{2/j} h_j(jr) dr  +Diam(\partial B_{\Tor^m}(p,2/j)).
\end{align}

Let $\beta_j$ be the length minimizing curve with respect to $g_j$ connecting $F_j(p_1)$ to $F_j(p_2)$. If $\beta_j \subset B_{\Tor^m}(p,1/j)$ then
\begin{align}
    d_{\infty}(p_1,p_2) \le L_{\infty}(F_j^{-1}(\beta_j)) = L_j(\beta_j) = d_j(F_j(p_1),F_j(p_2)).
\end{align}
If not then we can decompose $\beta_j = \beta_j^1\beta_j^2\beta_j^3$ into the concatenation of $\beta_j^1,\beta_j^3 \subset B_{\Tor^m}(p,1/j)$ and $\beta_j^2 \subset \Tor^m\setminus B_{\Tor^m}(p,1/j)$ so that
\begin{align}
    d_{\infty}(p_1,p_2) &\le L_{\infty}(F_j^{-1}(\beta_j^1)) + L_{\infty}(F_j^{-1}(\beta_j^3)) 
    \\&\le L_j(\beta_j) = d_j(F_j(p_1),F_j(p_2)).
\end{align}

\textit{Case 2:} $p_1,p_2 \in \Tor^m \subset M_{\infty}$.

Connect $F_j(p_1)$ to $F_j(p_2)$ via a straight line $\gamma\subset \Tor^m \setminus B_{\Tor^m}(p,2/j)$, if possible, and by $\alpha = \alpha_1\alpha_2\alpha_3$ the concatenation of $\alpha_1,\alpha_3 \subset \Tor^m \setminus B_{\Tor^m}(p,2/j)$ and $\alpha_2 \subset \partial B_{\Tor^m}(p,2/j)$ length minimizing. Then we can calculate
\begin{align}
 L_j(\alpha)&\le L_j(\alpha_1)+L_j(\alpha_3)+ 2 \diam(\partial B_{\Tor^m}(p,2/j))\\
 &\le L_{\mathbb{T}^m}(\alpha_1)+L_{\mathbb{T}^m}(\alpha_3)+ 2 \diam(\partial B_{\Tor^m}(p,2/j)).
\end{align}

Hence  we find
\begin{align}
    d_j(F_j(p_1), F_j(p_2))&\le \min\{L_j(\gamma),\inf_{\alpha}L_j(\alpha)\} 
    \\&\le d_{\infty}(p,q)+\diam(B_{\Tor^m}(p,3/j)) 
    \\&\qquad + 2 \diam(\partial B_{\Tor^m}(p,2/j)).
\end{align}

Let $\beta_j$ be the length minimizing curve with respect to $g_j$ connecting $F_j(p_1)$ to $F_j(p_2)$. Decompose $\beta_j = \beta_j^1\beta_j^2\beta_j^3$ into the concatenation of $\beta_j^1,\beta_j^3 \subset \Tor^m \setminus B_{\Tor^m}(p,2/j)$ and $\beta_j^2 \subset  B_{\Tor^m}(p,2/j)$ ($\beta_j^2$ and $\beta_j^3$ could be trivial if $\beta_j \subset \Tor^m \setminus B_{\Tor^m}(p,2/j)$). Then
\begin{align}
    d_{\infty}(p_1,p_2) &\le L_{\mathbb{T}^m}(\beta_j^1) + L_{\mathbb{T}^m}(\beta_j^3)+\diam(B_{\Tor^m}(p,3/j)) 
    \\&\le L_j(\beta_j) +\diam(B_{\Tor^m}(p,3/j)) 
    \\&= d_j(F_j(p_1),F_j(p_2)) +\diam(B_{\Tor^m}(p,3/j)) .
\end{align}

\textit{Case 3:} $p_1 \in \mathbb{D}^m\setminus \partial \mathbb{D}^m \subset M_{\infty}, p_2 \in \Tor^m \subset M_{\infty}$.

Let $\beta_j$ be the length minimizing curve with respect to $g_j$ connecting $F_j(p_1)$ to $F_j(p_2)$. Then we can decompose $\beta_j = \beta_j^1\beta_j^2\beta_j^3$ into the concatenation of $\beta_j^1 \subset \Tor^m \setminus B_{\Tor^m}(p,2/j)$, $\beta_j^2 \subset B_{\Tor^m}(p,2/j) \setminus B_{\Tor^m}(p,1/j)$, and $\beta_j^3 \subset B_{\Tor^m}(p,1/j)$. Then we can calculate
\begin{align}
    d_{\infty}(p,q) &\le L_{\mathbb{T}^m}(\beta_j^1) + L_{\infty}(F_j^{-1}(\beta_j^3)) +\diam(B_{\Tor^m}(p,3/j)) 
    \\&\le L_j(\beta_j)+\diam(B_{\Tor^m}(p,3/j)) 
    \\&= d_j(F_j(p_1),F_j(p_2))+\diam(B_{\Tor^m}(p,3/j)).
\end{align}

Similarly, if $\beta$ is a curve connecting $F_j(p_1)$ to $F_j(p_2)$, decomposed in a similar way as above, then
\begin{align}
    d_j(F_j(p_1),F_j(p_2)) &\le \inf_{\beta} L_j(\beta)
    \\&=\inf_{\beta} \left\{L_j(\beta_j^1)+L_j(\beta_j^2)+L_j(\beta_j^3) \right\}
    \\&=\inf_{\beta} \left\{L_{\mathbb{T}^m}(\beta_j^1)+L_j(\beta_j^2)+L_{\infty}(F_j^{-1}(\beta_j^3)) \right\}
    \\&\le d_{\infty}(p_1,p_2)+\diam(B_{\Tor^m}(p,3/j))
    \\&\qquad +\diam(\partial B_{\Tor^m}(p,2/j))+\frac{1}{j}\int_1^2 h_j(r) dr.
\end{align}

Since all of the error terms are going to zero uniformly for $p_1, p_2 \in M_{\infty}$ as $j \rightarrow \infty$ we find that $F_j$ is $\delta_j$-almost distance preserving and $\delta_j$-almost onto for $\delta_j \to 0$.  Thus by \cite{Gromov-metric} (cf. Corollary 7.3.28 in \cite{BBI})
 we have that
\begin{align}
    M_j \GHto M_{\infty}.
\end{align}

 Once $M_j$ has a GH limit $M_\infty$ then by Theorem 3.20 of \cite{SW-JDG} we know that a subsequence converges
in the $\mathcal{F}$ sense either to the $0$ space or a subset of the GH limit. Now consider any point $q\in \Tor^m \setminus \bar{B}_{\Tor^m}(p,2/j)$
and $B_{d_j}(q, r) \in M_j$ where $r<d_0(p,q)\le d_j(p,q)$.   For $r$ chosen sufficiently small and $j$ sufficiently large this
is a Euclidean ball, and so by Lemma 4.1 of \cite{Sormani-ArzAsc}, 
choosing possibly a smaller $r>0$, $B_{d_j}(q, r) $ converges to a Euclidean ball of radius $r$ inside
the $\mathcal{F}$ limit.  For $q \in B_{\Tor^m}(p,1/j)$ a similar argument implies that for $r$ small enough and $j$ large enough $B_{d_j}(q,r)$ converges to a Euclidean ball of radius $r$.  Piecing together these Euclidean balls, we see that the intrinsic flat limit is $M_{\infty}$
with possibly one point removed.  But that point is added back in when one takes the metric completion. 
\end{proof}

\subsection{Conformal Tori with no GH nor ${\mathcal{F}}$ limit}

Here we see an example where th $L^p$ norm of the conformal factor $f_j$ diverges for any $p > \frac{m}{\eta}$, $\eta > 1$ and whose volume diverges. We will show that the GH and 
${\mathcal{F}}$ limits are not well defined.

\begin{ex}\label{No Conv}
Consider the sequence of functions on $\Tor^m$ which are radially defined from a point $p \in \Tor^m$
\begin{equation}
f_j(r)=
\begin{cases}
j^{\eta} &\text{ if } r \in [0,1/j]
\\h_j(jr) &\text{ if } r \in [1/j,2/j]
\\ 1 & \text{ if } r \in (1/j,\sqrt{m}\pi].
\end{cases}
\end{equation}
where $\eta > 1$ and $h_j:[1,2] \rightarrow \R$ is a smooth, decreasing function so that $h_j(1) = j^{\eta}$ and $h_j(2) = 1$. Then for $M_j = (\Tor^m, f_j^2 g_{\Tor^m})$
\begin{align}
    &\vol(M_j) \rightarrow \infty,
    \\&\diam(M_j) \rightarrow \infty,
    \\&\|f_j\|_{L^p}\rightarrow \infty, \quad p > \frac{m}{\eta},
\end{align}
 and $M_j$ does not converge in the ${\mathcal{F}}$ or GH sense to any compact metric space.
\end{ex}
\begin{proof}
We begin by computing
\begin{align}
\vol(M_j) &= \int_{\Tor^m}f_j^m dVol 
\\&\ge Vol(B^m(p,1/j))j^{m \eta} + Vol(\Tor^m \setminus B^m(p,2/j))
\\&= j^{m(\eta -1)}Vol(B^m(p,1))+ Vol(\Tor^m \setminus B^m(p,2/j))\rightarrow \infty,
\end{align}
\begin{align}
    \|f_j\|_{L^p}&\ge Vol(B^m(p,1/j))j^{p \eta}  = \omega_m j^{p\eta-m} \rightarrow \infty, \quad p > \frac{m}{\eta},
\end{align}
and
\begin{align}
\diam(M_j) &= \int_0^{\sqrt{m}\pi}f_j dr \label{DiamBoundEx3.3}
\ge j^{\eta-1} + (\sqrt{m} \pi  - 2/j) \rightarrow \infty.
\end{align}
For sake of contradiction assume that $M_j \GHto M_{\infty}$ where $M_{\infty}$ is a compact metric space then the diameter $\diam(M_j)$ must converge to $\diam(M_{\infty})$
but by \eqref{DiamBoundEx3.3} we find that $\diam(M_j) \rightarrow  \infty$ which is a contradiction.

Similarly, for sake of contradiction assume that $M_j \Fto M_{\infty}$. In Lemma 4.1 of \cite{Sormani-ArzAsc} it is shown that if $p_j \in M_j$, $M_j \Fto M_{\infty}$, and $B_j(p_j,R)$, viewed as an integral current space, is such that $B_j(p_j,R) \Fto H(R)$ then $H(R) \subset M_{\infty}$. Notice that $B_j(p,R) = B\left(p, Rj^{-\eta}\right)$ which is contained in the piece of $M_j$ which is Euclidean space and hence $B_j(p,R) \Fto B(p,R) \subset M_{\infty}$.  This implies that $\mass(M_{\infty})\ge \omega_m R^m$ for all $R>0$ which is a contradicts the fact that integral current spaces have finite volume.
\end{proof}

\subsection{Conformal Tori whose GH and $\mathcal{F}$ limit disagree}

Here we see an example where $f_j\rightarrow 1$ in $L^m$ but such that the $L^p$ norm is unbounded for every $ p > m$. This is the borderline case between Example~\ref{L^p Conv} and Example~\ref{No L^m Conv}.  For this reason we will see that the GH limit is not the flat torus.  This shows that we cannot expect the conclusion of Theorem \ref{GenGHandFlatConvMetric} to hold when $f_j$ is unbounded. This also reinforces the fact that we will not be able to find Lipschitz or H\"{o}lder control from above on the distance function when $f_j$ is unbounded. Since a spline develops along the sequence a Lipschitz or H\"older bound from above cannot exist for otherwise we would contradict the main theorem of the appendix in \cite{Allen-Bryden}. 

\begin{ex}\label{VolControlDiamNotConvergent}
Consider the sequence of functions on $\Tor^m$ which are radially defined from a point $p \in \Tor^m$
\begin{equation}
f_j(r)=
\begin{cases}
\frac{j^{\eta}}{1+\ln(j)} &\text{ if } r \in [0,1/j^{\eta}]
\\ \frac{1}{r(1-\ln(r))} &\text{ if } r \in (1/j^{\eta},1/j]
\\ h_j(jr) &\text{ if } r \in (1/j,2/j]
\\ 1 & \text{ if } r \in (2/j,\sqrt{m}\pi].
\end{cases}
\end{equation}
where $\eta > 1$ and $h_j:[1,2]\rightarrow \R$ is a smooth, decreasing function so that $h_j(1) = \frac{j}{1+\ln(j)}$  and $h_j(2) = 1$.
Then $f_j$ is not bounded in $L^p$, $p > m$, but
\begin{align}
    \|f_j-1\|_{L^p(\Tor^m)} \rightarrow 0 \text{ for } p \le m.
\end{align}
For $M_j = (\Tor^m, f_j^2 g_{\Tor^m})$
\begin{align}
M_j &\mGHto M_{\infty},
\end{align}
where $M_{\infty}$ is $\Tor^m$ with a line of length $\ln(\eta)$ attached, and
\begin{align}
M_j &\VFto \Tor^m.
\end{align}
\end{ex}

\begin{proof}
We begin by computing
\begin{align}
&\vol(M_j) = \int_{\Tor^m}f_j^m dV 
\\&= Vol(B^m(p,1/j))\frac{j^m}{(1+\ln(j))^m} 
\\&\qquad+ \int_{B(p,1/j)\setminus B(p,1/j^{\eta})}\lp\frac{1}{r(1-\ln(r))}\rp^m dV
\\&\qquad+ \int_{B(p,2/j)\setminus B(p,1/j)}h_j(jr)^m dV
\\&\qquad+ Vol(\Tor^m \setminus B^m(p,2/j))
\end{align}
\begin{align}
&= \frac{1}{(1+\ln(j))^m}+Vol(\Tor^m \setminus B^m(p,2/j)) 
\\&\qquad+\int_{1/j^{\eta}}^{1/j}\frac{\omega_m}{r(1-\ln(r))^m} dr +\int_{1/j}^{2/j}\omega_m h_j(jr)^mr^{m-1} dr 
\\&= \frac{1}{(1+\ln(j))^m}+Vol(\Tor^m \setminus B^m(p,2/j)) 
\\&\qquad+\int_{1-\ln(1/j^{\eta})}^{1-\ln(1/j)}-\frac{\omega_m}{u^m} du +\int_{1/j}^{2/j}\omega_m h_j(jr)^mr^{m-1} dr 
\\&= \frac{1}{(1+\ln(j))^m}+Vol(\Tor^m \setminus B^m(p,2/j)) 
\\&\qquad+\frac{\omega_m}{m-1} ((1-\ln(1/j^{\eta}))^{1-m}- (1-\ln(1/j))^{1-m} )
\\&\qquad +\int_{1/j}^{2/j}\omega_m h_j(jr)^mr^{m-1} dr \rightarrow Vol(\Tor^m)
\end{align}
where we notice that
\begin{align}
 \frac{\omega_m (2^m-1)}{mj^m} &=\omega_m \int_{1/j}^{2/j}r^{m-1}dr  
 \\&\le \int_{1/j}^{2/j}\omega_m h_j(jr)^mr^{m-1} dr 
 \\&\le \omega_m \frac{j^m}{(1+\ln(j))^m}\int_{1/j}^{2/j}r^{m-1}dr =  \frac{\omega_m (2^m-1)}{m(1+\ln(j))^m}.
\end{align}
Now we also calculate the diameter
\begin{align}
\diam(M_j) &= \int_0^{\sqrt{m}\pi}f_j dr \label{DiamBoundEx3.4}
\\&= \frac{1}{1+\ln(j)}+\int_{1/j^{\eta}}^{1/j} \frac{1}{r(1-\ln(r))} dr 
\\&\qquad+\int_{1/j}^{2/j} h_j(jr) dr + (\sqrt{m}\pi - 2/j) 
\\&=\frac{1}{1+\ln(j)}+ (\sqrt{m}\pi - 2/j) +\int_{1/j}^{2/j} h_j(jr) dr
\\&\qquad+\ln(1-\ln(1/j^{\eta})) - \ln(1-\ln(1/j)) 
\\&=\frac{1}{1+\ln(j)}+ (\sqrt{m}\pi - 2/j) +\int_{1/j}^{2/j} h_j(jr) dr
\\&\qquad+\ln\lp\frac{1-\ln(1/j^{\eta})}{1-\ln(1/j)} \rp \rightarrow \sqrt{m} \pi + \ln(\eta).
\end{align}
So in this case we see that the volume and diameter are controlled but the diameter does not converge to the diameter of $\Tor^m$.

We also note that this example does not converge in $L^p$ norm for any $p > m$ since
\begin{align}
\int_{\Tor^m}f_j^p dVol &\ge Vol(B^m(p,\frac{1}{j^{\eta}}))\frac{j^{\eta p}}{(1+\ln(j))^p} 
=\frac{j^{\eta(p-m)}}{(1+\ln(j))^p} \rightarrow \infty.
\end{align}
Let $M_{\infty} = \Tor^m \disjointunion [0,\ln(\eta)]$ attached at a point and consider the map 
\begin{align}
    F_j: M_{\infty} \rightarrow M_j
\end{align}
defined so that $F_j(\Tor^m) = \Tor^m \setminus B(p,1/j^{\eta})$ in a standard almost distance preserving fashion. Then $F_j([0,\ln(\eta)])$ is mapped onto a radial curve from p to $\partial B(p,1/j^{\eta})\subset M_j$ so that
\begin{align}
    F_j(r) = q, \quad q \in \partial B(p,1/j^{\eta/e^r}).
\end{align} 
Notice that this map is almost onto. We can see that the map is almost distance preserving since for $q \in \partial B(p,1/j^{\eta/e^r})\subset M_j$ where $r \le \ln(\eta)$ we can perform a calculation similar to \eqref{DiamBoundEx3.4}
\begin{align}
d_j(p,q) &= \int_0^{1/j^{\eta/e^r}}f_j dr 
\\&=\frac{1}{1+\ln(j)}+\ln\lp\frac{1-\ln(1/j^{\eta})}{1-\ln(1/j^{\eta/e^r})} \rp \rightarrow r.
\end{align} 
Hence $M_j \GHto M_{\infty}$ by \cite{Gromov-metric}.

Once $M_j$ has a GH limit $M_\infty$ then by \cite{SW-JDG} we know that a subsequence converges
in the $\mathcal{F}$ sense either to the $0$ space or a subset of the GH limit.   Since the $\mathcal{F}$
limit must have the same dimension as the sequence, the subsequence converges
in the $\mathcal{F}$ sense either to the $0$ space or a subset of the flat torus.   Now consider any point $q\neq p$ in
the flat torus and consider $B_{d_j}(q, r) \in M_j$ where $r<d_0(p,q)\le d_j(p,q)$.   For $j$ sufficiently large this
is a Euclidean ball, and so by Lemma 4.1 in \cite{Sormani-ArzAsc}, choosing possibly a smaller $r>0$
$B_{d_j}(q, r) $ converges to a Euclidean ball of radius $r$ inside
the $\mathcal{F}$ limit.  Piecing together these Euclidean balls, we see that the intrinsic flat limit is a flat torus
with possibly one point removed.  But that point is added back in when one takes the metric completion and hence we find $\mathcal{VF}$ convergence.   A new proof of this limit will appear in upcoming work of the authors.
\end{proof}

\subsection{No GH Convergence}

Here we build on the previous example by producing an example which does not even have a GH limit. This shows why we cannot expect to find GH convergence when we only assume a metric lower bound and volume convergence. Again, $f_j\rightarrow 1$ in $L^m$ but such that the $L^p$ norm is unbounded for every $ p > m$ which is the borderline case between Example~\ref{L^p Conv} and Example~\ref{No L^m Conv}. This example appears to converge in the $\mathcal{VF}$ sense to a flat torus and this will be shown in a future paper by the authors.

\begin{ex}\label{FConvNoGHConv}
Consider the sequence of functions on $\Tor^m$, $m \ge 2$, which are radially defined from a point $p \in \Tor^m$
\begin{equation}
h_j^p(r)=
\begin{cases}
\frac{j^{\eta}}{1+\ln(j)} &\text{ if } r \in [0,1/j^{\eta}]
\\ \frac{1}{r(1-\ln(r))} &\text{ if } r \in (1/j^{\eta},1/j]
\\ h_j(jr) &\text{ if } r \in (1/j,2/j],
\end{cases}
\end{equation}
where $\eta > 1$ and $h_j:[1,2]\rightarrow \R$ is a smooth, decreasing function so that $h_j(1) = \frac{j}{1+\ln(j)}$  and $h_j(2) = 1$.
Let $\{p_1,p_2,...,p_{k_j}\}$, where $k_j \in \N$, $k_j \le \lfloor \sqrt{\ln(j)} \rfloor$, $k_j \rightarrow \infty$, by equally spaced points around a flat circle in $\Tor^m$ and define the conformal factor 
\begin{equation}
f_j(q)=
\begin{cases}
h_j^{p_i}(q) &\text{ if } q \in B(p_i,2/j), 1 \le i \le k_j
\\ 1 &\text{ otherwise}.
\end{cases}
\end{equation}
Then $f_j$ is not bounded in $L^p$, $p > m$,
\begin{align}
    \|f_j-1\|_{L^p(\Tor^m)} \rightarrow 0 \text{ for } p \le m,
\end{align}

but the sequence $M_j = (\Tor^m, f_j^2g_{\Tor^m})$ has no GH limit.
\end{ex}
\begin{proof}
First we verify convergence in $L^m$ norm
\begin{align}
\vol(M_j) &= \int_{\Tor^m}f_j^m dVol
\\& = \sum_{i=1}^{k_j} \int_{B(p_i,2/j)} (h_j^{p_i})^m dVol 
\\&\qquad + \vol(\Tor^m)- k_j \vol(B(p_1,2/j))
\\& = \vol(\Tor^m)- k_j \vol(B(p_1,2/j))+\frac{k_j}{(1+\ln(j))^m} 
\\&\qquad +\frac{\omega_m k_j}{m-1} ((1+\eta\ln(j))^{1-m}- (1+\ln(j))^{1-m} )
\\&\qquad +k_j\int_{1/j}^{2/j}\omega_m h_j(jr)^mr^{m-1} dr \rightarrow \vol(\Tor^m)
\end{align}
where we took advantage of calculation done in the proof of Examples \ref{VolControlDiamNotConvergent} and the fact that $k_j \le \lfloor \sqrt{\ln(j)}\rfloor$. 
Now we find for $q \in \partial B(p_k,1/j^{\eta/e^r})$, $r\le \ln(\eta)$ we can perform a calculation similar to \eqref{DiamBoundEx3.4} to find
\begin{align}
d_j(p_k,q) &= \int_0^{1/j^{\eta/e^r}}f_j dr \rightarrow r.
\end{align}
 So for $r= \ln(\eta)$ we find that $N(r) \ge k_j$, the number of balls of radius $r$ needed to cover $M_j$. Hence by the fact that $k_j \rightarrow \infty$ and Gromov's compactness theorem \cite{Gromov-metric} we find that the sequence cannot have a GH limit.
\end{proof}

\section{Proof of the Main Theorem}\label{sec:MainThmProof}

In this section we will use the hypotheses of Theorem \ref{GenGHandFlatConvMetric}  to achieve important estimates on distances, volume and diameter which will allow us to conclude with the proof of Theorem \ref{ConfGHandFlatConv} and Theorem \ref{GenGHandFlatConvMetric}.   Note that
in each step of the proof we carefully itemize exactly which hypotheses are needed for that particular step, in this way our lemmas and propositions may be applied elsewhere.

\subsection{Global Volume Convergence to Local Volume Convergence}

In this subsection we prove:

\begin{lem}\label{Global-to-Local}  
Let $g_j, g_0$ be continuous Riemannian metrics defined on the compact manifold $M$. If 
\begin{align}
    g_j \ge \lp1 - 1/j\rp g_0
\end{align}
and 
\begin{align}
   \vol_{g_j}(M)\to \vol_{g_0}(M)
\end{align}  
then for any measurable set $W \subset M$ we find
\begin{align}
\vol_{g_j}(W)&\rightarrow \vol_{g_0}(W).
\end{align}
\end{lem}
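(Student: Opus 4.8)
The plan is to reduce the statement to an $L^1$ estimate on the Radon--Nikodym density of $dV_{g_j}$ with respect to $dV_{g_0}$. Write $\rho_j := dV_{g_j}/dV_{g_0}$; in any local chart this is $\sqrt{\det(g_j)/\det(g_0)}$, and at each point it equals $\prod_{i=1}^m \lambda_i$, where $\lambda_1^2,\dots,\lambda_m^2$ are the eigenvalues of $g_j$ with respect to $g_0$ (as in the paper's discussion of $|g_1|_{g_0}$). The hypothesis $g_j \ge (1-1/j)g_0$ forces $\lambda_i^2 \ge 1-1/j$ for every $i$, hence the pointwise bound $\rho_j \ge c_j$ on all of $M$, where $c_j := (1-1/j)^{m/2}$. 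Note $c_j \le 1$ and $c_j \to 1$.

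Next I would observe that $\rho_j - c_j$ is a nonnegative function whose $g_0$-integral tends to $0$: indeed
\[
\int_M (\rho_j - c_j)\, dV_{g_0} = \vol_{g_j}(M) - c_j\,\vol_{g_0}(M) \longrightarrow \vol_{g_0}(M) - \vol_{g_0}(M) = 0
\]
by the volume-convergence hypothesis together with $c_j \to 1$. Since $\rho_j \ge c_j$ and $c_j \le 1$ we have the pointwise inequality $|\rho_j - 1| \le (\rho_j - c_j) + (1 - c_j)$, so
\[
\|\rho_j - 1\|_{L^1(M,g_0)} \le \int_M (\rho_j - c_j)\, dV_{g_0} + (1 - c_j)\,\vol_{g_0}(M) \longrightarrow 0 .
\]
That is, $\rho_j \to 1$ in $L^1(M, g_0)$.

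Then for an arbitrary measurable $W \subset M$ I would simply estimate
\[
\bigl|\vol_{g_j}(W) - \vol_{g_0}(W)\bigr| = \Bigl|\int_W (\rho_j - 1)\, dV_{g_0}\Bigr| \le \int_M |\rho_j - 1|\, dV_{g_0} \longrightarrow 0,
\]
and this bound is uniform over all measurable $W$, which is marginally stronger than the stated conclusion.

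I do not expect a genuine obstacle here; the one place the argument genuinely uses the specific form of the lower bound is the sharp density estimate $\rho_j \ge c_j$ with $c_j \to 1$, which is exactly what upgrades ``$\rho_j$ is $L^1$-close to $c_j$'' to ``$\rho_j$ is $L^1$-close to $1$.'' With only a fixed lower bound $g_j \ge \lambda_1 g_0$, $\lambda_1 < 1$, one would merely get $\rho_j$ close to the constant $\lambda_1^{m/2} < 1$ and the conclusion would fail, consistent with the role the $(1-1/j)$ hypothesis plays throughout the paper.
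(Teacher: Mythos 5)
Your proof is correct and uses essentially the same idea as the paper: the paper applies the pointwise lower bound (its Lemma~4.2) to both $W$ and $M\setminus W$ and squeezes, which is exactly the complement-decomposition form of your observation that $\rho_j - c_j \ge 0$ has vanishing $g_0$-integral. Your phrasing in terms of $L^1$-convergence of the Radon--Nikodym density $\rho_j \to 1$ makes the uniformity over $W$ explicit, but the mechanism is identical.
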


We start by showing that the volume of measurable sets $U \subset M$ with respect to $g_j$ converge to the volume with respect to $M_0$. Our first lemma considers the lower bound on volume for the sequence. 

\begin{lem}\label{ConfVolBoundBelow} 
Let $g_j, g_0$ be Riemannian metrics defined on the compact manifold  $M$. If 
\begin{align}
   g_j \ge \lp 1 - 1/j\rp g_0 
\end{align}
then for any measurable set $U \subset M$ we find
\begin{align}
\vol((U,g_j)) &\ge (1 - 1/j)^{\frac{m}{2}} \vol((U,g_0)).
\end{align}
\end{lem}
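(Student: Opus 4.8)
The plan is to reduce the claim to a pointwise comparison of volume densities. Fix a point $p \in M$ and diagonalize $g_j$ with respect to $g_0$ exactly as in the discussion of the $L^p$ norm above: there are $g_0$-orthonormal vectors $v_1,\dots,v_m \in T_pM$ and numbers $\lambda_1^2(p),\dots,\lambda_m^2(p) > 0$ with $g_j(v_i,v_i) = \lambda_i^2(p)$. The hypothesis $g_j(v,v) \ge (1-1/j) g_0(v,v)$ for all $v$, applied to $v = v_i$, gives $\lambda_i^2(p) \ge 1 - 1/j$ for each $i$ and each $p$.

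Next I would relate the two Riemannian volume forms. Working in an oriented chart (or directly with the densities), one has $dV_{g_j} = \sqrt{\det(g_j)}\,dx$ and $dV_{g_0} = \sqrt{\det(g_0)}\,dx$, and since $\det(g_j)/\det(g_0) = \prod_{i=1}^m \lambda_i^2(p)$ is the Jacobian of $g_j$ relative to $g_0$, the pointwise bound on the eigenvalues yields
\begin{align}
dV_{g_j}(p) = \Bigl(\prod_{i=1}^m \lambda_i^2(p)\Bigr)^{1/2} dV_{g_0}(p) \ge (1 - 1/j)^{m/2}\, dV_{g_0}(p).
\end{align}
Integrating this inequality of nonnegative densities over the measurable set $U$ gives $\vol((U,g_j)) \ge (1-1/j)^{m/2}\,\vol((U,g_0))$, which is the assertion. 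Measurability of $U$ is all that is needed for the integration, and continuity of $g_j, g_0$ ensures the densities are genuine (locally bounded, measurable) functions, so no regularity issue arises.

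There is no real obstacle here: the only point requiring a word of care is that the eigenvalue bound must hold uniformly in $p$, which it does since the hypothesis $g_j \ge (1-1/j)g_0$ is a pointwise statement on every tangent space, and that the comparison of volume forms is genuinely pointwise (independent of the chart), which follows because $\prod_i \lambda_i^2(p)$ is coordinate-invariant. One could alternatively phrase the whole argument via a partition of unity subordinate to coordinate charts, but the intrinsic density version above is cleaner and avoids bookkeeping.
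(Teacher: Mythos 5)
Your proposal is correct and is essentially the paper's own argument: the paper simply asserts the pointwise comparison of volume measures $dV_{g_j} \ge (1-1/j)^{m/2}\, dV_{g_0}$ from the metric inequality and integrates, while you spell out the eigenvalue/determinant computation behind that density comparison. No gap; the added detail is a fine elaboration of the same route.
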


\begin{proof}
We note that by the assumption $g_j \ge g_0 \lp1 - 1/j\rp$ we immediately find the inequality for the associated measures $dvol_{g_j} \ge (1 - 1/j)^{m/2} dvol_{g_0}$ from which the claim follows.
\end{proof}

We now prove Lemma~\ref{Global-to-Local}

\begin{proof}
Given any measurable set $W$ and our lower bound on $g_j$ we
can apply Lemma~\ref{ConfVolBoundBelow} to both $W$ and $M\setminus W$
to obtain 
\begin{align}
\vol((W,g_j)) &\ge (1 - 1/j)^{\frac{m}{2}} \vol((W,g_0))\\
\vol((M\setminus W,g_j)) &\ge (1 - 1/j)^{\frac{m}{2}} \vol((M\setminus W,g_0)).
\end{align}
So we also have an upper bound on 
\begin{eqnarray}
\vol((W,g_j)) &= & \vol_{g_j}(M) - \vol((M\setminus W,g_j)) \\
&\le&  \vol_{g_j}(M) - (1 - 1/j)^{\frac{m}{2}} \vol((M\setminus W,g_0))
\end{eqnarray}
Applying the volume convergence we obtain our claim.
\end{proof}

\subsection{Convergence in $L^{\frac{m}{2}}$ norm to Volume Convergence}\label{subsec:VolControl}

In this subsection we prove that convergence in $L^{\frac{m}{2}}$ norm  for $m=dim(M)$ combined with $g_j \ge (1-1/j)g_0$
 implies volume convergence.   Note that this theorem does not require $g_j\le Kg_0$.

\begin{lem}\label{ConfVolBound}
Let $g_j, g_0$ be continuous Riemannian metrics defined on the compact manifold $M$. If 
\begin{align}
    g_j \ge \lp1 - 1/j\rp g_0
\end{align}
and 
\begin{align}
    \|g_j\|_{L^{\frac{m}{2}}_{g_0}(M)} \rightarrow \|g_0\|_{L^{\frac{m}{2}}_{g_0}(M)},
\end{align}  then for any measurable set $U \subset M$ we find
\begin{align}
\vol_j(U)&\rightarrow \vol_0(U).
\end{align}
\end{lem}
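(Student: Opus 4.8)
The plan is to reduce the statement to \emph{global} volume convergence, $\vol_{g_j}(M)\to\vol_{g_0}(M)$, and then invoke Lemma~\ref{Global-to-Local} to upgrade it to convergence on every measurable set $U\subset M$. So the whole task comes down to a two-sided bound on $\vol_{g_j}(M)$, with the lower bound coming from the metric inequality and the upper bound coming from the $L^{m/2}$ norm convergence.

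For the lower bound I would simply apply Lemma~\ref{ConfVolBoundBelow} with $U=M$, which gives $\vol_{g_j}(M)\ge(1-1/j)^{m/2}\vol_{g_0}(M)$ and hence $\liminf_{j\to\infty}\vol_{g_j}(M)\ge\vol_{g_0}(M)$; this step uses only the lower metric bound $g_j\ge(1-1/j)g_0$.

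For the upper bound I would compare the volume densities pointwise. Writing $\lambda_1^2,\dots,\lambda_m^2$ for the eigenvalues of $g_j$ with respect to $g_0$ at a given point, one has $dV_{g_j}=\bigl(\prod_{i=1}^m\lambda_i\bigr)\,dV_{g_0}$ while $|g_j|_{g_0}=\bigl(\sum_{i=1}^m\lambda_i^4\bigr)^{1/2}$. Combining the AM--GM inequality, $\prod_{i=1}^m\lambda_i\le\bigl(\tfrac1m\sum_{i=1}^m\lambda_i^2\bigr)^{m/2}$, with Cauchy--Schwarz, $\sum_{i=1}^m\lambda_i^2\le\sqrt m\,\bigl(\sum_{i=1}^m\lambda_i^4\bigr)^{1/2}$, gives the pointwise bound $\prod_{i=1}^m\lambda_i\le m^{-m/4}\,|g_j|_{g_0}^{m/2}$. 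Integrating over $M$ yields $\vol_{g_j}(M)\le m^{-m/4}\int_M|g_j|_{g_0}^{m/2}\,dV_{g_0}=m^{-m/4}\,\|g_j\|_{L^{m/2}_{g_0}(M)}^{m/2}$. Since $g_0$ has all eigenvalues equal to $1$, equality holds there, so $m^{-m/4}\,\|g_0\|_{L^{m/2}_{g_0}(M)}^{m/2}=\vol_{g_0}(M)$; feeding in the hypothesized convergence $\|g_j\|_{L^{m/2}_{g_0}(M)}\to\|g_0\|_{L^{m/2}_{g_0}(M)}$ then produces $\limsup_{j\to\infty}\vol_{g_j}(M)\le\vol_{g_0}(M)$.

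Putting the two bounds together gives $\vol_{g_j}(M)\to\vol_{g_0}(M)$, and Lemma~\ref{Global-to-Local} immediately delivers $\vol_j(U)\to\vol_0(U)$ for every measurable $U$. The only delicate point is the sharp pointwise inequality relating $\prod_{i}\lambda_i$ to $|g_j|_{g_0}^{m/2}$ together with the observation that $g_0$ saturates it; this is precisely why the exponent $m/2$ is the correct threshold, and why the conclusion could not be expected for a smaller exponent in the absence of an upper metric bound.
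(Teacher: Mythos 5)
Your proof is correct, and it is organized differently from the paper's. You prove only the \emph{global} statement $\vol_{g_j}(M)\to\vol_{g_0}(M)$ from the norm hypothesis (via the pointwise bound $\sqrt{Det_{g_0}(g_j)}=\prod_i\lambda_i\le m^{-m/4}|g_j|_{g_0}^{m/2}$ together with Lemma~\ref{ConfVolBoundBelow}) and then hand off the localization to Lemma~\ref{Global-to-Local}. The paper instead localizes at the level of the $L^{m/2}$ norm: for each measurable $U$ it writes $\|g_j\|_{L^{m/2}_{g_0}(U)}^{m/2}=\|g_j\|_{L^{m/2}_{g_0}(M)}^{m/2}-\|g_j\|_{L^{m/2}_{g_0}(M\setminus U)}^{m/2}$, bounds the subtracted term from below by the corresponding $g_0$ norm using $g_j\ge(1-1/j)g_0$, and only then applies the same determinant--trace inequality on $U$, finishing with Lemma~\ref{ConfVolBoundBelow} on $U$; Lemma~\ref{Global-to-Local} is never invoked there. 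The two arguments share the identical key inequality (AM--GM plus Cauchy--Schwarz on the eigenvalues) and the same lower-bound lemma, and the complement-subtraction trick you implicitly use is the same one, just executed inside Lemma~\ref{Global-to-Local} at the volume level rather than at the norm level. Your route is slightly more modular since Lemma~\ref{Global-to-Local} is already established earlier in the section (and there is no circularity, as its proof rests only on Lemma~\ref{ConfVolBoundBelow}); the paper's route yields as a byproduct the marginally stronger intermediate fact that $\limsup_j\|g_j\|_{L^{m/2}_{g_0}(U)}\le\|g_0\|_{L^{m/2}_{g_0}(U)}$ on every measurable $U$, but both reach the same conclusion.
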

\begin{proof}
First notice
\begin{align}
  \|g_j\|_{L^{\frac{m}{2}}_{g_0}(U)}^{\frac{m}{2}}&= \|g_j\|_{L^{\frac{m}{2}}_{g_0}(M)}^{\frac{m}{2}} - \|g_j\|_{L^{\frac{m}{2}}_{g_0}(M\setminus U)}^{\frac{m}{2}}
    \\&\le \|g_j\|_{L^{\frac{m}{2}}_{g_0}(M)}^{\frac{m}{2}} - \|g_0\|_{L^{\frac{m}{2}}_{g_0}(M\setminus U)}^{\frac{m}{2}} 
    \\&\rightarrow \|g_0\|_{L^{\frac{m}{2}}_{g_0}(U)}^{\frac{m}{2}} = m^{\frac{m}{4}}\vol_0(U).
\end{align}
Now by the determinant trace inequality which follows from the arithmetic-geometric mean inequality we find
\begin{align}
Det_{g_0}(g_j)^{\frac{1}{m}} \le \frac{1}{m} Tr_{g_0}(g_j)\le\frac{|g_j|_{g_0}}{m^{\frac{1}{2}}}.
\end{align}
Here $Det_{g_0}(g_j), Tr_{g_0}(g_j)$ means to choose an orthonormal basis $\{v_1,...,v_m\}$ for $g_0$ and form a symmetric, positive definite matrix by evaluating $A_{pq} = g_j(v_p,v_q)$, $1 \le p,q\le m$ and then take the determinant or trace of that matrix, respectively. Hence if $\lambda_1^2,...,\lambda_m^2$ are the eigenvalues of $g_j$ with respect to $g_0$ then,
\begin{align}
 Tr_{g_0}(g_j)=\sum_{i=1}^m \lambda_i^2 \le \sqrt{m}|g_j|_{g_0},
\end{align}
 and hence we obtain
\begin{align}
Det_{g_0}(g_j) \le \frac{|g_j|^{m}_{g_0}}{m^{\frac{m}{2}}}.
\end{align}
Using this inequality we find
\begin{align}
\vol_j(U) &= \int_{U} \sqrt{Det_{g_0}(g_j)} d vol_{g_0}
\\&\le  \int_{U}\frac{|g_j|^{\frac{m}{2}}_{g_0}}{m^{\frac{m}{4}}} d vol_{g_0}\rightarrow \vol_0(U).
\end{align}

Now we have an upper volume bound and the fact that $g_j \ge \lp1-1/j \rp g_0$ means
we can apply Lemma \ref{ConfVolBoundBelow} for the lower bound.
\end{proof}

\subsection{Pointwise Almost Everywhere Convergence of Distances}

This next theorem uses the volume convergence from above and distance controls from below to
imply distances converge almost everywhere on $M\times M$.  It does not require the
Lipschitz bound from above and will be applied in future work of the authors.

\begin{thm}\label{PointwiseAEConvergenceDistances}
If $(M, g_j)$ are compact continuous Riemannian manifolds without boundary and $(M,g_0)$ is a smooth Riemannian manifold such that
\be\label{MetricLowerBoundPointwiseThm}
g_j(v,v) \ge g_0(v,v) \qquad \forall v\in TM
\ee
and
\be
\vol_j(M) \to \vol_0(M),
\ee
then there exists a subsequence so that
\be
\lim_{j\to \infty} d_j(p,q) = d_0(p,q) \textrm{ pointwise a.e. } (p,q) \in M\times M, \label{pointwiseAlmostEverywhereDistances}
\ee
where
\be
d_j(p,q)=\inf\{ L_j(C):\, \,C(0)=p,\,\,C(1)=q\,\},
\ee
\be
L_j(C)=\int_0^1 \sqrt{g_j\lp C'(s), C'(s)\rp} \, ds.
\ee
\end{thm}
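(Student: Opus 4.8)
\emph{Approach.} The lower bound is immediate: since $g_j(v,v)\ge g_0(v,v)$ for all $v$, the length comparison used in Lemma~\ref{dist-above} (with the inequality reversed) gives $d_j(p,q)\ge d_0(p,q)$ for all $p,q\in M$, so it remains only to bound $d_j$ from above, and it suffices to control the nonnegative excess $d_j-d_0$. My plan is to show
\[
\int_{M\times M}\big(d_j(p,q)-d_0(p,q)\big)\,dV_{g_0}(p)\,dV_{g_0}(q)\ \longrightarrow\ 0,
\]
and then pass to a subsequence along which this nonnegative integrand converges to $0$ at $(V_{g_0}\times V_{g_0})$-a.e.\ $(p,q)$; combined with $d_j\ge d_0$ this is exactly \eqref{pointwiseAlmostEverywhereDistances}.

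\emph{Key ingredients.} First, $\vol_j(M)\to\vol_0(M)$ together with $g_j\ge g_0$ forces the volume densities $\rho_j:=\sqrt{\det_{g_0}(g_j)}$ (so $dV_{g_j}=\rho_j\,dV_{g_0}$ and $\rho_j\ge 1$ pointwise) to converge to $1$ in $L^1(M,dV_{g_0})$, since $\|\rho_j-1\|_{L^1(M,g_0)}=\int_M(\rho_j-1)\,dV_{g_0}=\vol_j(M)-\vol_0(M)\to 0$ (Lemma~\ref{Global-to-Local} could also be invoked). Second, if $\lambda_1^2,\dots,\lambda_m^2$ are the eigenvalues of $g_j$ with respect to $g_0$ at a point then each $\lambda_i\ge 1$, hence $\lambda_{\max}\le\prod_i\lambda_i=\rho_j$, so $\sqrt{g_j(v,v)}\le\rho_j\,|v|_{g_0}$ for every $v$. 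Consequently, for a.e.\ $(p,q)$ (outside the cut locus, a null set) there is a unique $g_0$-minimizing geodesic $\gamma_{pq}$, and parametrizing it by $g_0$-arclength,
\[
d_j(p,q)\le L_j(\gamma_{pq})\le\int_{\gamma_{pq}}\rho_j\,ds_{g_0}=d_0(p,q)+\int_{\gamma_{pq}}(\rho_j-1)\,ds_{g_0}.
\]

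\emph{The main estimate and the obstacle.} It then remains to bound $\int_{M\times M}\!\int_{\gamma_{pq}}(\rho_j-1)\,ds_{g_0}\,dV_{g_0}(p)\,dV_{g_0}(q)$ by $C(M,g_0)\|\rho_j-1\|_{L^1(M,g_0)}$; this says that geodesics, on average over their endpoints, accumulate little length inside a set of small $g_0$-measure, and is the technical heart of the proof. I would prove it by writing the $q$-integral of $\int_{\gamma_{pq}}(\rho_j-1)\,ds_{g_0}$ in geodesic polar coordinates about $p$, using that the Jacobian of $\exp_p$ is bounded above on the compact segment domain uniformly in $p$ (since $g_0$ is smooth and $M$ compact) to reduce, up to a factor $C(M,g_0)$, to $\int_M\int_{S_pM}\int_0^{R(p,v)}(\rho_j(\exp_p(sv))-1)\,ds\,dv\,dV_{g_0}(p)$, where $R(p,v)$ is the cut distance; then I would invoke the invariance of the Liouville measure $d\mu=dV_{g_0}(p)\,dv$ on the unit tangent bundle $SM$ under the geodesic flow (Santal\'o's formula), which rewrites this as $\int_{SM}(\rho_j-1)(\pi(\eta))\,\widetilde R(\eta)\,d\mu(\eta)\le\diam_{g_0}(M)\,\omega_{m-1}\,\|\rho_j-1\|_{L^1(M,g_0)}$ for a backward cut distance $\widetilde R(\eta)\le\diam_{g_0}(M)$. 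The main obstacle I anticipate is making this backprojection/change-of-variables argument rigorous in the presence of the cut and conjugate loci, where the geodesic stops minimizing and the exponential Jacobian can degenerate; this is handled by observing those loci are null and restricting every change of variables to the open segment domains, on which the relevant maps are diffeomorphisms. Combining the displayed pointwise bound with this estimate yields $\int_{M\times M}(d_j-d_0)\le C(M,g_0)\|\rho_j-1\|_{L^1(M,g_0)}\to 0$, and extracting an a.e.-convergent subsequence of the nonnegative functions $d_j-d_0$ completes the proof.
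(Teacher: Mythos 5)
Your proposal is correct, but it takes a genuinely different route from the paper's. The paper localizes: for each pair $(p,q)$ off the cut locus it builds a tube $\mathcal{T}_{v,\alpha,p}$ foliated by $g_0$-geodesics, applies the coarea formula together with a lower bound on the normal exponential Jacobian (arranging the hypersurface $S_k$ to avoid focal points), converts local volume convergence (Lemma~\ref{Global-to-Local}) into $\int_N |d_j-d_0|\,d\mu_N\to 0$ for nearby endpoints, then integrates over the parameters $\eta,\tau,v'$ and uses an inverse-function-theorem argument to produce open sets $\mathcal{U}(p,q)\subset M\times M$ on which $\int|d_j-d_0|\to 0$, finally covering $(M\times M)\setminus\mathcal{S}$ by countably many such sets and diagonalizing. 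You instead globalize: the eigenvalue observation $\lambda_{\max}\le\lambda_1\cdots\lambda_m=\rho_j$ (valid precisely because $g_j\ge g_0$ forces every $\lambda_i\ge 1$, the same structural use of the lower bound as in the paper's coarea step) gives $d_j(p,q)-d_0(p,q)\le\int_{\gamma_{pq}}(\rho_j-1)\,ds_{g_0}$, and the Liouville/Santal\'o (segment-inequality type) estimate converts $\|\rho_j-1\|_{L^1}=\vol_j(M)-\vol_0(M)\to 0$ into $\int_{M\times M}(d_j-d_0)\,dV_{g_0}\,dV_{g_0}\le C(M,g_0)\,(\vol_j(M)-\vol_0(M))\to 0$, from which the a.e.\ convergent subsequence is immediate. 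What your route buys: it is shorter and quantitative, yields the stronger conclusion that $d_j-d_0\to 0$ in $L^1(M\times M)$ with an explicit constant (diameter, sup of the exponential Jacobian on the compact segment domain, volume of $\mathbb{S}^{m-1}$), and it sidesteps both the focal-point bookkeeping and the covering/diagonal argument; the technical points you flag (changing variables only on the open segment domain, null cut locus, invariance of the Liouville measure, bounding the backward minimizing time by $\operatorname{Diam}_{g_0}(M)$) are standard and handled exactly as you indicate. What the paper's route buys is a purely local statement, convergence of $\int|d_j-d_0|$ over explicit product neighborhoods built from geodesic foliations, which is the form the authors reuse, but as a proof of the stated theorem your integral-geometric argument is complete in outline and arguably cleaner.
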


\begin{proof}
First we note by Lemma \ref{biLip-bounds} that
\be
d_0(p,q) \le d_j(p,q) \qquad \forall (p,q) \in M\times M.
\ee

Applying our volume convergence and Lemma~\ref{Global-to-Local} to any measurable set $U \subset M$ we have:
\be\label{VolumeSetsConverge}
    \vol_j(U) = \int_U Det_{g_0}(g_j) dV_{g_0} \rightarrow \vol_0(U),
\ee
which will be key to the argument which follows.

Let $p,q \in M$ so that $q$ is not a cut point of p with respect to $g_0$.  Let $v_{pq}\in T_pM$ be the smallest vector such that $\exp_p(v_{pq}) = q$.   Note that if $q$ is a cut point then we can choose to replace it with another point which is arbitrarily close to $q$ and closer to $p$. This does not cause a problem since the set of cut points from $p$ has measure zero. Our goal is to show pointwise a.e. convergence of distances for a subsequence.   
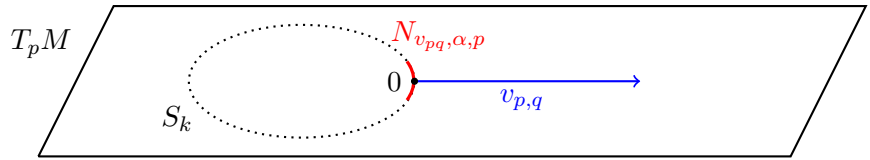
\begin{figure}[h] 
\centering
\begin{tikzpicture}[scale=.5]
\draw[thick] (-18,-5) to (2, -5) to (4,-1) to (-16, -1) to (-18, -5);
\node[right] at (-19, -2) {$T_pM$};
\draw [->, blue, thick] (-8,-3) -- (-2,-3);
\node[right] at (-6, -3.5) {$\textcolor{blue}{v_{p,q}}$};
\draw[dotted, thick] (-11,-3) circle [x radius=3, y radius=1.5];
\node[right] at (-15, -4) {$S_k$};
\draw[very thick,red] (-8.2,-3.5) arc (-20:20:3 and 1.5);
\node[right] at (-8.9,-1.8) {$\textcolor{red}{N_{v_{pq},\alpha,p}}$};
\draw[fill] (-8,-3) circle [radius=0.08];
\node[right] at (-9, -3) {$0$};
\end{tikzpicture}
\label{fig-A}
\caption{ $N_{v_{pq},\alpha,p}=\{w \in S_k: |w|_{g_0} < \alpha\}\subset T_pM$.}
\end{figure}

Given any $v\in T_pM$ we can define $S_k\subset T_pM$ be a $m-1$ sphere (or hyperplane if $k=0$) with constant principal curvature $k$ as a subset of $T_pM$, so that $0\in S_k$ and $v \perp T_0S_k$. 
See Figure~\ref{fig-A} where $v=v_{pq}$.  Here we define the principal curvatures of $S_k$ relative to the normal vector $v$. For instance, $S_0 = v^{\perp}\subset T_pM$. The parameter $k$ is important to avoid focal points later in the argument.  Now for $\alpha \in (0,\infty)$ let
\begin{align}
    N_{v,\alpha,p}=\{w \in S_k: |w|_{g_0} < \alpha\}.
\end{align}

\begin{figure}[h] 
\centering
\begin{tikzpicture}[scale=.5]
\draw[thick] (-20,-5) to [out=5,in=195] (0, -5) 
to [out=45,in=210] (4,-1) 
to [out=170,in=10](-16, -1) 
to [out=230,in=70](-20, -5);
\node[right] at (-20, -2) {$M$};
\draw[very thick] (-2.2,-3.5) arc (-20:20:3 and 1.5);
\draw [thick] (-8.2,-3.5) to [out=340,in=190] (-2.2,-3.5);%top
\draw (-8.15,-3.4) to [out=345,in=185] (-2.15,-3.35);
\draw (-8.1,-3.3) to [out=350,in=180] (-2.1,-3.2);%
\draw (-8.05,-3.15) to [out=355,in=175] (-2.05,-3.1);
\draw [thick, blue] (-8,-3) to [out=0,in=170] (-2,-3);
\node[right] at (-5, -4.5) {$\textcolor{blue}{\gamma_{v_{p,q}}}$};
\draw  (-8.05,-2.85) to [out=8,in=170] (-2.05,-2.85);
\draw  (-8.1,-2.7) to [out=15,in=170] (-2.1,-2.7);%
\draw  (-8.15,-2.6) to [out=20,in=165] (-2.15,-2.6);
\draw [thick] (-8.2,-2.5) to [out=25,in=160] (-2.2,-2.5);%bottom
\node[right] at (-4,-1.4) {$\textcolor{black}{\mathcal{T}_{v_{p,q}, \alpha,p} }$};
\draw[very thick,red] (-8.2,-3.5) arc (-20:20:3 and 1.5);
\node[right] at (-12,-1.5) {$\textcolor{red}{ \exp_p(N_{v_{pq}, \alpha,p})}$};
\draw[fill] (-8,-3) circle [radius=0.08];
\node[right] at (-9, -3) {$p$};
\draw[fill] (-2,-3) circle [radius=0.08];
\node[right] at (-2, -3) {$q$};
\draw [thick, orange] (-8.1,-3.3) to [out=350,in=180] (-2.1,-3.2);%
\draw[fill, orange] (-8.1,-3.3) circle [radius=0.06];
\node[right] at (-9.1, -3.7) {$\textcolor{orange}{p'}$};
\draw[fill, orange]  (-2.05,-3.23) circle [radius=0.06];
\node[right] at (-2.4, -3.8) {$\textcolor{orange}{q'}$};
\node[right] at (-7, -4.5) {$\textcolor{orange}{\gamma_{p'}}$};
\end{tikzpicture}
\label{fig-B}
\caption{$\mathcal{T}_{v_{pq}, \alpha,p}  \subset M$.}
\end{figure}
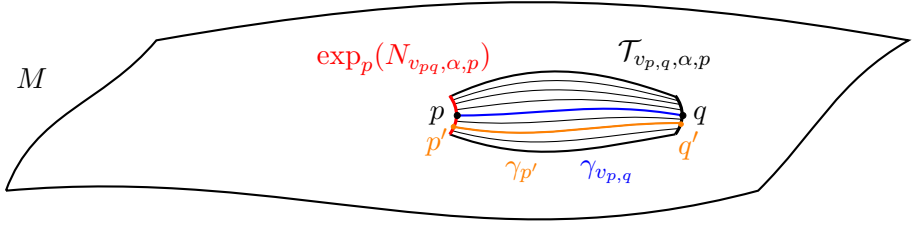

Now if $p'= \exp_p(w)$, $w \in N_{v,\alpha,p}$ and we choose $\alpha$ small enough then we can extend $v$ to  $T_{p'}M$ for all $p' \in \exp_p(N_{v, \alpha,p})$, by choosing a vector $v \in T_{p'}M$ so that  $v \perp \exp_p(S_k)$, of the same length as $v \in T_pM$ and so that $v$ is a continuous vector field on $\exp_p(N_{v, \alpha,p})$. This allows us to define $q'=\exp_{p'}(v)$ as well as
\begin{align}
    \gamma_{p'}(t) = \exp_{p'}(tv), \quad 0 \le t \le 1,
\end{align}
and
\begin{align}
    \mathcal{T}_{v, \alpha,p} = \{\gamma_{p'}(t):p' = \exp_p(w), w \in N_{v,\alpha,p}, 0 \le t \le 1\}.
\end{align}
See Figure~\ref{fig-B}.
Now we can choose $\alpha$ small enough so that $\mathcal{T}_{v,\alpha,p}$ is foliated by $\gamma_{p'}$ which are length minimizing with respect to $g_0$. If we let 
\begin{align}
 \exp^{\perp}: N\exp_p(S_k) \rightarrow M   
\end{align} 
be the normal exponential map where $ N\exp_p(S_k)$ is the normal bundle to $ \exp_p(S_k)\subset M$, $d \mu_{N_{v,\alpha,p}}=d\mu_N$ be the usual measure for $N_{v_{pq},\alpha,p} \subset T_pM \approx\R^m$, and $\lambda_1^2,...,\lambda_m^2$ the eigenvalues of $g_j$ with respect to $g_0$ where $\lambda_1 \le ... \le \lambda_m$. Let $\sqrt{h}$ be the square root of the determinant of the metric $h$ for the hypersurface $\exp(N_{v,\alpha,p})$ in normal coordinates on $N_{v,\alpha,p}$. Let $d\exp^{\perp}$ be the differential of the normal exponential map and $|d\exp^{\perp}|_{g_0}$ be the determinant of this map evaluated in directions orthogonal to the $\gamma_{p'}$ which foliate $\mathcal{T}_{v_{pq},\alpha,p}$. Then by the coarea formula we can calculate 
\begin{align}
  \vol_j(\mathcal{T}_{v,\alpha,p})&= \int_{\mathcal{T}_{v, \alpha}}  \sqrt{Det_{g_0}(g_j)}dV_{g_0} 
   \\&= \int_{N_{v,\alpha,p}}\int_{\gamma_{p'}}\lambda_1...\lambda_m |d\exp^{\perp}|_{g_0}\sqrt{h}dt_{g_0}d\mu_{N}
   \\& \ge  \int_{N_{v,\alpha,p}}\int_{\gamma_{p'}}\lambda_1...\lambda_{m-1} |d\exp^{\perp}|_{g_0}\sqrt{h}dt_{g_j}d\mu_{N}
    \\&\ge  \int_{N_{v,\alpha,p}}\int_{\gamma_{p'}} |d\exp^{\perp}|_{g_0}\sqrt{h}dt_{g_j}d\mu_{N}\label{CrucialVolumeToDistanceEq1} 
   \\&\ge  \int_{N_{v,\alpha,p}}\int_{\gamma_{p'}} |d\exp^{\perp}|_{g_0}\sqrt{h}dt_{g_0}d\mu_{N}\label{CrucialVolumeToDistanceEq2} 
   \\&=  \vol_0(\mathcal{T}_{v,\alpha,p}),
\end{align}
where we used the assumption that $g_0 \le g_j$ to deduce that 
\begin{align}
    dt_{g_0} &= |\gamma'|_{g_0} dt \le |\gamma'|_{g_j} dt =  dt_{g_j},
    \\ dt_{g_j} &= |\gamma'|_{g_j} dt \le \lambda_m |\gamma'|_{g_0} dt = \lambda_m dt_{g_0}
    \\ \lambda_i &\ge 1 \quad 1 \le i \le m.
\end{align}

Then since we have assumed that $Vol_j(M) \rightarrow Vol_0(M)$ we see by \eqref{VolumeSetsConverge} that
\begin{align}\label{FoliationVolume}
    \vol_j(\mathcal{T}_{v,\alpha,p})\rightarrow \vol_0(\mathcal{T}_{v,\alpha,p}),
\end{align}
which squeezes \eqref{CrucialVolumeToDistanceEq1} and \eqref{CrucialVolumeToDistanceEq2} to converge to each other. 

In normal coordinates we notice that $\sqrt{h}=1$ at $p$ and hence we can choose $\alpha$ small enough if needed to ensure that $\sqrt{h} > h_0 > 0$ on $N_{v,\alpha,p}$.

Now we notice that if there are no focal points between $S_0=v^{\perp}$ and $\gamma_p(tv)$ for all $0 \le t \le |v|_{g_0}$ then by construction of $\mathcal{T}_{v,\alpha,p}$ and Proposition 10.30 of \cite{Oneill} we have 
\begin{align} 
    |d\exp^{\perp}|_{g_0} \ge A_{p,q} \quad \text{on } \mathcal{T}_{v,\alpha,p}, \label{NonDegnerateFoliation}
\end{align}
where $A_{p,q}$ is a constant which depends on the spreading of geodesics foliating $\mathcal{T}_{v,\alpha,p}$. 

If it happens to be the case that there are focal points  between $S_0=v^{\perp}$ and $\gamma_p(tv)$ for some $0 \le t \le |v|_{g_0}$ then we can choose $k <0$ to be small enough so that the index form must be positive definite along $\gamma_p(tv)$ (Note that $\exp_p(S_k)$ will not have constant principal curvatures but the principal curvatures will agree with $S_k$ at $p$ and the difference in curvatures is determined by the metric and its Christoffel symbols in normal coordinates at $p$). This can be done since we know the curvature of $g_0$ is bounded (See the expression for the index form in \cite{Oneill} Corollary 10.27). Hence if we choose $k$ small enough we ensure that the geodesics which leave $\exp_p(S_k)$ diverge from each other enough so that they cannot meet up at a focal point before $q$ or even $C(p)$, the cut locus of $p$. So by Proposition 10.30 of \cite{Oneill} we know that \eqref{NonDegnerateFoliation} holds.

Now by combining this observation with \eqref{FoliationVolume} we find
\begin{align}
    \int_{N_{v,\alpha,p}}&\int_{\gamma_{p'}} |d\exp^{\perp}|_{g_0}\sqrt{h}(dt_{g_j}-dt_{g_0})\,d\mu_{N} 
    \\&\ge A_{p,q} h_0  \int_{N_{v,\alpha,p}} L_j(\gamma_{p'}) - L_0(\gamma_{p'})\,d \mu_N
    \\&\ge A_{p,q} h_0  \int_{N_{v,\alpha,p}} d_j(p',q') - d_0(p',q') \,d \mu_N
    \\&= A_{p,q}  h_0 \int_{N_{v,\alpha,p}} |d_j(p',q') - d_0(p',q')| \,d \mu_N, \label{FirstIntegralDistancesToZero}
\end{align}
and hence \eqref{FirstIntegralDistancesToZero} converges to $0$ as $j \rightarrow \infty$.

Notice that we can choose $\bar{\eta}_2,\bar{\eta}_1>0$ so that $\bar{\eta_1} < 1< \bar{\eta}_2$ small enough so that for all $0 <\bar{\eta}_1<\eta < \bar{\eta}_2$ the above argument works for any $\mathcal{T}_{\eta v, \alpha,p}$
 and additionally we note the bound
\begin{align}
    \int_{N_{v,\alpha,p}}&\int_{\gamma_{p'}} |d\exp_{p'}^{\perp}|_{g_0}\sqrt{h}(dt_{g_j}-dt_{g_0})\,d\mu_{N}
    \\&\le Vol_j(\mathcal{T}_{\eta v,\alpha,p})-Vol_0(\mathcal{T}_{\eta v,\alpha,p}) 
    \\&\le Vol_j(M) \le V_0.
\end{align}
Now we can use the dominated convergence theorem to see that
\begin{align}
    \int_{\bar{\eta}_1}^{\bar{\eta}_2}\int_{N_{ v,\alpha,p}}\int_{\gamma_{p'}} |d\exp_{p'}^{\perp}|_{g_0}\sqrt{h}(dt_{g_j}-dt_{g_0})\,d\mu_{N} \,d\eta \rightarrow 0.
\end{align}
This implies for $q'_{\eta} = \exp_{p'}(\eta v)$ that
\begin{align}
    \int_{\bar{\eta}_1}^{\bar{\eta}_2}\int_{N_{ v,\alpha,p}} |d_j(p',q'_{\eta}) - d_0(p',q'_{\eta})| \,d \mu_N \,d \eta \rightarrow 0.
\end{align}
For sufficiently small $\bar{\tau}>0$ and any $\tau \in (-\bar{\tau}, \bar{\tau})$, we define 
\be\label{p-tau}
p_{\tau} = \exp_p(\tau v) \textrm{ and } p_{\tau}' \in N_{v, \alpha, p_{\tau}}
\ee
and then have 
\be \label{q-eta}
q_{\eta}=q_{\eta,\tau} = \exp_{p_\tau}(\eta v) \textrm{ and } q'_\eta=q'_{\eta,\tau}= \exp_{p_\tau'}(\eta v)
\ee
as in Figure~\ref{fig-C}.  

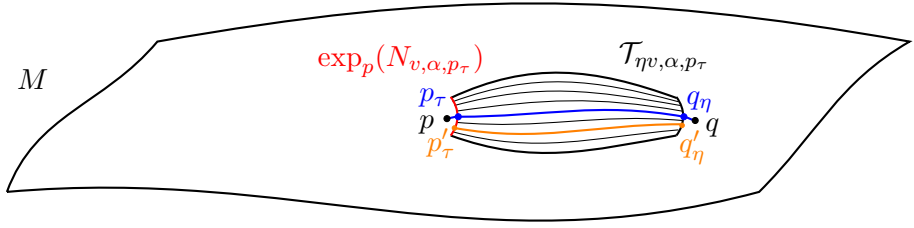
\begin{figure}[h] 
\centering
\begin{tikzpicture}[scale=.5]
\draw[thick] (-20,-5) to [out=5,in=195] (0, -5) 
to [out=45,in=210] (4,-1) 
to [out=170,in=10](-16, -1) 
to [out=230,in=70](-20, -5);
\node[right] at (-20, -2) {$M$};
\draw[thick] (-2.2,-3.5) arc (-20:20:3 and 1.5);
\draw [thick] (-8.2,-3.5) to [out=340,in=190] (-2.2,-3.5);%top
\draw (-8.15,-3.4) to [out=345,in=185] (-2.15,-3.35);
\draw [thick, orange] (-8.1,-3.3) to [out=350,in=180] (-2.1,-3.2);%
\draw (-8.05,-3.15) to [out=355,in=175] (-2.05,-3.1);
\draw [thick, blue] (-8,-3) to [out=0,in=170] (-2,-3);%\node[right] at (-6, -3.5) {$\gamma_{v_{p,q}}$};
\draw  (-8.05,-2.85) to [out=8,in=170] (-2.05,-2.85);
\draw  (-8.1,-2.7) to [out=15,in=170] (-2.1,-2.7);%
\draw  (-8.15,-2.6) to [out=20,in=165] (-2.15,-2.6);
\draw [thick] (-8.2,-2.5) to [out=25,in=160] (-2.2,-2.5);%bottom
\node[right] at (-4,-1.4) {$\textcolor{black}{\mathcal{T}_{\eta v, \alpha,p_\tau} }$};
\draw[thick,red] (-8.2,-3.5) arc (-20:20:3 and 1.5);
\node[right] at (-12,-1.5) {$\textcolor{red}{ \exp_p(N_{v, \alpha,p_\tau})}$};
\draw[fill, blue] (-8,-3) circle [radius=0.08];
\node[right] at (-9.3, -2.5) {$\textcolor{blue}{p_\tau}$};
\draw [thick, blue] (-8.3,-3.05) to [out=10, in=180] (-8,-3);
\draw[fill] (-8.3,-3.05) circle [radius=0.08];
\node[right] at (-9.3, -3.2) {$p$};
\draw[fill, orange] (-8.1,-3.3) circle [radius=0.06];
\node[right] at (-9.1, -3.7) {$\textcolor{orange}{p'_\tau}$};
\draw[fill, blue] (-2,-3) circle [radius=0.08];
\node[right] at (-2.2, -2.6) {$\textcolor{blue}{q_\eta}$};
\draw [thick, blue] (-2,-3) to [out=350, in=170] (-1.7, -3.1);
\draw[fill] (-1.7,-3.1) circle [radius=0.08];
\node[right] at (-1.7, -3.3) {$q$};
\draw[fill, orange]  (-2.05,-3.23) circle [radius=0.06];
\node[right] at (-2.4, -3.8) {$\textcolor{orange}{q'_\eta}$};

\end{tikzpicture}
\label{fig-C}
\caption{$\mathcal{T}_{\eta v, \alpha,p_\tau}  \subset M$ when $\tau>0$, $\eta<1$, and $v=v_{pq}$.}
\end{figure}

We can choose $\bar{\tau}$ small enough so that the entire argument above goes through to find
\begin{align}
    \int_{-\bar{\tau}}^{\bar{\tau}}\int_{\bar{\eta}_1}^{\bar{\eta}_2}\int_{N_{ v,\alpha,p_{\tau}}} |d_j(p_{\tau}',q'_{\eta}) - d_0(p_{\tau}',q'_{\eta})| \,d \mu_N \,d \eta \,d \tau\rightarrow 0.
\end{align}
We define 
\begin{align}
    V_{\epsilon}=\{v'\in T_pM : |v'|_{g_0}=|v_{pq}|_{g_0}, g_0(v',v) > (1-\epsilon) |v|_{g_0}^2\}.
\end{align}
Then again for $\epsilon$ chosen small enough we find
\begin{align} \label{almost-there}
    \int_{V_{\epsilon}}\int_{-\bar{\tau}}^{\bar{\tau}}\int_{\bar{\eta}_1}^{\bar{\eta}_2}\int_{N_{v',\alpha,p_{\tau}}} |d_j(p_{\tau}',q'_{\eta}) - d_0(p_{\tau}',q'_{\eta})| \,d \mu_N \,d \eta \,d \tau \,dV\rightarrow 0.
\end{align}

Let us define 
\begin{align}
    &\mathcal{N}(p,q) =
    \\&\quad \{(v',\tau,\eta,w): v' \in V_{\epsilon}, \tau \in (-\bar{\tau},\bar{\tau}), \eta \in (\bar{\eta}_1,\bar{\eta}_2), w \in N_{v',\alpha,p_\tau}\}.
\end{align}
We claim that when $q$ is not in the
cut locus of $p$ and $\epsilon, \bar{\tau}, \bar{\eta}_i$ are well chosen depending on $p$ and $q$
as above depending on $k,\alpha$ which were chosen so that we have (\ref{NonDegnerateFoliation}), then
the map
\be
\Phi_{p,q}: \mathcal{N}(p,q) \to M\times M \qquad \Phi(v',\tau,\eta,w)=(p'_\tau, q'_\eta)
\ee
as in (\ref{p-tau})-(\ref{q-eta}) is a diffeomorphism onto its image 
\be
\mathcal{U}(p,q)=\Phi(\mathcal{N}(p,q))
\ee
which contains the point $(p,q)$.   This can be proven by applying the inverse function theorem
to $\Phi_{p,q}$ at the point $(v_{p,q}, 0, 1, 0)$ as follows.    Take any 
collection of curves $v_i'(t)\in V_{\epsilon}$ for $i=1 ... m-1$
such that $v'(0)=v_{pq}$ and $dv_i'/dt (0)$ are orthonormal and any collection
of curves $w_i(t)\in N_{v_{pq},\alpha,p}$ for $i=1 ... m-1$
such that $w_i(0)=0$ and $dw_i/dt (0)$ are orthnonormal.
Then we have $2m=(m-1)+1+1+(m-1)$ linearly independent vectors:
\begin{eqnarray}
\tfrac{d}{dt}\Phi_{p,q*}(v'_i(t), 0, 1, 0) |_{t=0}&\in& d(exp_p)_{v_{pq}}(dv_i'/dt (0)) \subset T_qM \\
\tfrac{d}{dt}\Phi_{p,q*}(v_{pq}, t, 1, 0) |_{t=0}&=& v_{pq} \in T_pM \\
\tfrac{d}{dt}\Phi_{p,q*}(v_{pq}, 0, 1+t, 0) |_{t=0}&=& \gamma_{pq}'(1) \in T_qM \\
\tfrac{d}{dt}\Phi_{p,q*}(v_{pq}, 0, 1, w_i(t)) |_{t=0} &\in& v_{pq}^\perp \subset T_pM 
\end{eqnarray}
by our choice of $k,\alpha$ at (\ref{NonDegnerateFoliation}).
Thus by the inverse function theorem, possibly choosing even tighter a collection of 
$\epsilon, \bar{\tau}, \bar{\eta}_i$, we have a diffeomorphism. 

Note further that  pulling back $dV_{g_0}\times dV_{g_0}$ to $\mathcal{N}(p,q)$ we know that 
\begin{align}
    \Phi_{(p,q)}^*(dV_{g_0} \times dV_{g_0})<< dV_{\mathcal{N}_{p,q}} 
\end{align}
where $<<$ denotes absolute continuity of measures. 

Combining this with (\ref{almost-there}) we see that for any $(p,q)\in M\times M$ such that $q$ is not a cut point of $p$
there exists an open set $\mathcal{U}(p,q)$ containing $(p,q)$ such that
\begin{align} 
   \int_{\mathcal{U}(p,q)} |d_j(p',q') - d_0(p',q')| \, dV_{g_0} \times dV_{g_0} \rightarrow 0. \label{IntegralToZero}
\end{align}

Thus we have an open cover of $M\times M \setminus \mathcal{S}$ where
\begin{align}
    \mathcal{S}=  \left\{ \{p\}\times C(p):p\in M\right\},
\end{align}
where $C(p)$ is the cut locus of $p$. Since $\mathcal{S} \subset M\times M$ is a measure zero set we are allowed to proceed with our argument on $(M\times M) \setminus \mathcal{S}$. 
Define the continuous map 
\begin{align}
\Psi: (M\times M) \setminus \mathcal{S} \rightarrow \R, \quad \Psi(p,q) = d_0(q,C(p)),
\end{align}
so that we can define the compact sets  
\begin{align}
K_i=\left\{(p,q) \in (M\times M) \setminus \mathcal{S}: d(q,C(p))\ge \frac{1}{1+i}\right\} \subset (M\times M) \setminus \mathcal{S}
\end{align}
 so that $K_i \subset K_{i+1}$ and 
 \be
 \bigcup_{i=1}^{\infty} K_i = (M\times M) \setminus \mathcal{S}.
 \ee 

 Since
  \begin{align}
 \{\mathcal{U}_{(p,q)}: (p,q) \in (M\times M) \setminus \mathcal{S}\}, 
\end{align}
is an open over of $(M\times M) \setminus \mathcal{S}$, it is also an open cover of $K_i$ for each $i \in \N$. So we can choose a finite subcover $\{U_1,...,U_{I_1}\}$ of $K_1$, and extend to a finite subcover of $K_2$, $\{U_1,...,U_{I_1},...U_{I_2}\}$, and continue in this way to construct a countable collection of elements of $\mathcal{U}$, $\{U_i\}_{i \in \N}$, so that
\begin{align}
(M\times M)\setminus \mathcal{S} \subset \bigcup_{i\in \N} U_i.
\end{align}
By  \eqref{IntegralToZero} we have for all $i\in \N$:
\begin{align}
   \int_{U_i} |d_j(p',q') - d_0(p',q')|  \,dV_{g_0} \times dV_{g_0} \rightarrow 0. \label{IntegralToZero2}
\end{align}

By \eqref{IntegralToZero2} we can choose a subsequence to find pointwise a.e. convergence of $d_j \rightarrow d_0$ on $U_1$. Then we can choose a further subsequence to find pointwise a.e. convergence of $d_j \rightarrow d_0$ on $U_2$ and continue to build a nested sequence of subsequences to obtain pointwise a.e. convergence on each $U_i$. By extracting a diagonal subsequence we then obtain
\begin{align}
    d_{j}(p',q') \paeto d_0(p',q') 
\end{align}
for almost every $(p',q') \in  M\times M$ with respect to $dV_{g_0}\times dV_{g_0}$.

\end{proof}

\subsection{Proof of our Main Theorems }
\label{subsec:MainProof}

We start by proving Theorem \ref{GenGHandFlatConvMetric}.

\begin{proof}
First we note that we can assume $g_j \ge g_0$ by rescaling all the metrics by $(1-1/j)$
and we still have the same hypotheses with the volumes still converging to the volume of $g_0$. In the case where we assume $L^p$ convergence we can use Lemma \ref{LpToLq}, Lemma \ref{LpToLpNorm}, and Lemma \ref{ConfVolBound} to obtain volume convergence.
Applying  Theorem \ref{PointwiseAEConvergenceDistances} we see that, 
after possibly passing to a subsequence, 
$d_j$ converge pointwise to $d_0$ for almost every $(p,q) \in M \times M$.  

By the bi-Lipschitz bounds on $g_j$,
we can apply Theorem \ref{CompactnessThmFlatandGH} to extract a 
further subsequence of $M_j$ such that
\be
(M, d_j) \GHto (M, d_\infty) \textrm{ and } (M, d_j) \Fto (M, d_\infty)
\ee
where $M_\infty=(M, d_\infty)$ where $d_\infty$ is the uniform limit of $d_j$. 
Since we already know $d_j$ converge pointwise to $d_0$ almost everywhere
and both are continuous, we see that $d_\infty=d_0$.  

So for every subsequence there is a further subsequence which converges 
\be
(M, g_j) \GHto (M, g_0) \textrm{ and } (M, g_j) \Fto (M, g_0).
\ee
Thus no subsequence can converge anywhere else or fail to converge, which
implies that the original sequence converges.

Applying the volume convergence hypothesis again combined with
Theorem~\ref{VF-to-mGH-thm} or Theorem~\ref{Global-to-Local}
we have
$(M_j,g_j) \VFto  (M, g_0)$ and $(M_j,g_j) \mGHto  (M, g_0)$.
\end{proof}

We finish by noting that the proof of Theorem \ref{ConfGHandFlatConv} follows from Theorem \ref{GenGHandFlatConvMetric}.

  \bibliographystyle{alpha}
 \bibliography{allen}

\end{document}